\documentclass[12pt,fullpage]{amsart}

\usepackage{amssymb,amsmath,amsthm,amsfonts}
\usepackage[mathscr]{euscript}
\usepackage{dsfont}
\usepackage{mathrsfs}
\usepackage{esint}
\usepackage{graphicx}
\usepackage{enumitem}
\usepackage{geometry}
\geometry{letterpaper, portrait, margin=1in}

\renewcommand{\leq}{\leqslant}

\renewcommand{\geq}{\geqslant}




\usepackage{color}
\definecolor{citation}{rgb}{0.2,0.5,0.2}
\definecolor{formula}{rgb}{0.1,0.2,0.5}
\definecolor{url}{rgb}{0,0.2,0.7}
\usepackage[colorlinks=true,linkcolor=formula,urlcolor=url,citecolor=citation]{hyperref}

\vfuzz3pt 
\hfuzz2pt 



\newtheoremstyle%
   {olivier}
   {1.5ex}
   {1.5ex}
   {\sl}
   {11.5pt}
   {\bf\sc}   
   {.~---}
   {1ex}
   {}

\newtheoremstyle%
   {oliv0}
   {1.5ex}
   {1.5ex}
   {\sl}
   {11.5pt}
   {\bf\sc}   
   {~---}
   {1ex}
   {}

\newtheoremstyle%
   {clovis}
   {1.5ex}
   {1.5ex}
   {\normalfont}
   {11.5pt}
   {\sl}               
   {.~---}
   {1ex}
   {}

\newtheorem{theo}{Theorem}[section]

\newtheorem{lemma}[theo]{Lemma}
\newtheorem{prop}[theo]{Proposition}

\theoremstyle{definition}

\theoremstyle{fact}

\theoremstyle{remark}
\newtheorem{remark}[theo]{Remark}

\numberwithin{equation}{section}



\def\R {\mathbb{R}}

\def\N {\mathbb{N}}
\def\S {\mathbb{S}}

\def\eps{\varepsilon}


\usepackage{scalerel}[2014/03/10]
\usepackage[usestackEOL]{stackengine}
\def\dashint{\,\ThisStyle{\ensurestackMath{%
  \stackinset{c}{.2\LMpt}{c}{.5\LMpt}{\SavedStyle-}{\SavedStyle\phantom{\int}}}%
  \setbox0=\hbox{$\SavedStyle\int\,$}\kern-\wd0}\int}
\def\ddashint{\,\ThisStyle{\ensurestackMath{%
  \stackinset{c}{.2\LMpt}{c}{.5\LMpt+.2\LMex}{\SavedStyle-}{%
    \stackinset{c}{.2\LMpt}{c}{.5\LMpt-.2\LMex}{\SavedStyle-}{%
      \SavedStyle\phantom{\int}}}}\setbox0=\hbox{$\SavedStyle\int\,$}\kern-\wd0}\int}

\newcommand{\baa}{\begin{array}}
\newcommand{\eaa}{\end{array}}
\newcommand{\ba}{\begin{eqnarray}}
\newcommand{\ea}{\end{eqnarray}}
\newcommand{\be}{\begin{equation}}
\newcommand{\ee}{\end{equation}}


\newlength{\defbaselineskip}
\setlength{\defbaselineskip}{\baselineskip}
\newcommand{\setlinespacing}[1]
           {\setlength{\baselineskip}{#1 \defbaselineskip}}

\allowdisplaybreaks[4]

\begin{document}

\title[On the role of the range of dispersal]{On the role of the range of dispersal in a nonlocal Fisher-KPP equation: an asymptotic analysis}

\author[Julien Brasseur]{Julien Brasseur}
\address{\'Ecole des Hautes \'Etudes en Sciences Sociales, PSL Research University, CNRS, Centre d'Analyse et de Math\'ematique Sociales, Paris, France}
\email{julien.brasseur@ehess.fr, julienbrasseur@wanadoo.fr}

\begin{abstract}
In this paper, we study the asymptotic behavior as $\varepsilon\to0^+$ of solutions $u_\varepsilon$ to the nonlocal stationary Fisher-KPP type equation
$$ \frac{1}{\varepsilon^m}\int_{\mathbb{R}^N}J_\varepsilon(x-y)(u_\varepsilon(y)-u_\varepsilon(x))\hspace{0.1em}\mathrm{d}y+u_\varepsilon(x)(a(x)-u_\varepsilon(x))=0\text{ in }\mathbb{R}^N, $$
where $\varepsilon>0$ and $0\leq m<2$. Under rather mild assumptions and using very little technology,
we prove that there exists one and only one positive solution $u_\varepsilon$
and that $u_\varepsilon\to a^+$ as $\varepsilon\to0^+$ where $a^+=\max\{0,a\}$. This
generalizes the previously known results and answers an open question
raised by Berestycki, Coville and Vo.
Our method of proof is also of independent interest as it shows how to reduce this nonlocal problem to a local one.
The sharpness of our assumptions is also briefly discussed.
\end{abstract}

\subjclass[2010]{35J60}


\maketitle

\tableofcontents

\section{Introduction}

\subsection{Biological context}
This paper is motivated by the study of persistence criteria for populations with long range dispersal strategies.
Long range dispersal is a frequently observed feature in ecology.
It arises in many situations ranging from plant fecundity \cite{Cain,Clark,Clark2,Schurr} to movement patterns of marine predators \cite{Bartumeus,Bartumeus2,Hallat,Humphries}.
In this context, the evolution of the density of population, $u(x,t)$, is commonly described
by a nonlocal reaction-diffusion equation of the form
\begin{align}
\frac{\partial u}{\partial t}(x,t)=D\int_{\R^N}J(x-y)(u(y,t)-u(x,t))\hspace{0.1em}\mathrm{d}y+f(x,u(x,t)), \label{EQ:NON}
\end{align}
for $(x,t)\in\R^N\times[0,\infty)$; where $D$ is a dispersal rate (or diffusion coefficient), $J$ is a dispersal kernel modelling the probability
to ``jump" from one location to another, and $f$ is a nonlinear reaction term that accounts for
the demographic variations of the population.


In this paper, we will consider populations that have a \emph{bounded ecological niche}, i.e. populations which cannot survive outside a bounded set. Precisely, we shall consider KPP-type nonlinearities of the form
$$ f(x,s)=s\left(a(x)-s\right) \text{ with } (x,s)\in\R^N\times\R, $$
where $a\in C(\R^N)\cap L^\infty(\R^N)$ is a function (that can be thought of as modelling the available resource) which is such that $a^+=\max\{0,a\}\not\equiv0$ and
\begin{align}
\begin{array}{l}
\displaystyle\limsup_{|x|\to\infty}\hspace{0.1em}a(x)<0.
\end{array} \label{HYP:A1}
\end{align}

From the mathematical point of view, studying the persistence or the extinction of a given species amounts to establishing the existence or nonexistence of positive solutions to an equation of the type of \eqref{EQ:NON} (whose precise form may vary depending on the type of behavior one wishes to describe). This type of nonlocal problem is currently receiving a lot of attention and has been studied under various perspectives, see \cite{Bates,BaZh,CovDup07,Cov07,Kao,Yagisita}. Here, we follow the approach initiated by Hutson \emph{et al.} in \cite{Hutson}. Namely, we consider the equation
\begin{align}
\frac{\partial u_\eps}{\partial t}(x,t)=\frac{1}{\eps^m}\int_{\R^N}J_\eps(x-y)(u_\eps(y,t)-u_\eps(x,t))\hspace{0.1em}\mathrm{d}y+u_\eps(x,t)(a(x)-u_\eps(x,t)), \label{Hutsonn}
\end{align}
for $(x,t)\in\R^N\times[0,\infty)$; where $0\leq m\leq 2$ is a ``cost parameter",
\begin{align}
J_\eps(z)=\frac{1}{\eps^N}\hspace{0.1em}J\left(\frac{z}{\eps}\right)\text{ for some }J\in L^1(\R^N), \label{Disp:Ker}
\end{align}
and $\eps>0$ is a measure of the range of dispersal. 

Before going any further, let us say a brief word about the meaning of \eqref{Hutsonn}. The key idea behind this model relies on the notion of \emph{dispersal budget} (introduced in \cite{Hutson}). In a nutshell, it consists in assuming that \emph{the amount of energy per individual that the species can use to disperse is fixed} (because of the environmental and developmental constraints) and that \emph{the displacement of the individuals has a cost} (reflecting the amount of energy required to disperse) which, for simplicity, is assumed to be proportional to $c(x)=|x|^m$ with $0\leq m\leq 2$.
If the dispersal kernel depends on the range of dispersal as in \eqref{Disp:Ker}, it can then be shown (see e.g. \cite{BCV,Hutson}) that the rate of dispersal is of the form $D_\eps\sim 1/\eps^m$, which thereby yields equation \eqref{Hutsonn}.
In other words, \emph{there is a trade-off between the number of offspring and the average dispersal distance among offspring}.
To illustrate this, consider, for instance, a population of trees that produces and disperses seeds. The population may then ``choose" between two opposite strategies: either it disperses few seeds ($1/\eps^m\ll1$) over large distances ($\eps\gg1$), or it disperses many of them ($1/\eps^m\gg1$) over short distances ($\eps\ll1$).\par

Our main concern in this paper is to understand the influence of the cost parameter on the asymptotic properties of the solutions.
That is, we wish to address the following question:
\begin{center}
\emph{How does the cost of displacement impact the persistence} \\
\emph{strategies of a given population?}
\end{center}
\smallskip

To the best of our knowledge, the first extensive study of this problem goes back to \cite{BCV} (see also \cite{LI,VoShen2,Vo}), where the assumption \eqref{HYP:A1} was also considered.
There, it was shown that positive stationary solutions can be seen as the outcome of an invasion process (see \cite[Theorem 1.1]{BCV}), i.e. that their existence/nonexistence gives the right persistence criteria. For this reason, we will focus on the corresponding stationary equation, namely
\begin{align}
\frac{1}{\eps^m}(J_\eps\ast u_\eps-u_\eps)+u_\eps(a-u_\eps)=0 \text{ in }\R^N. \label{KPP}
\end{align}

Equation \eqref{KPP} was first studied in detail by Berestycki, Coville and Vo in \cite{BCV}.
They proved that, for large values of $\eps$, persistence always occurs when $0<m\leq 2$ and they obtain the precise asymptotics of the solution when $\eps\to\infty$. When $m=0$, they show that if the resource is ``too small" (i.e. if $\sup_{\R^N}a<1$) then the population either dies out above some threshold $\eps_0>0$ or vanishes asymptotically as $\eps\to\infty$.

However, \emph{if we have a rather clear picture when $\eps\gg1$ it is not quite the case when $\eps\ll1$}, except in the particular case $m=2$. In this case, Berestycki \emph{et al.} \cite[Theorem 1.4]{BCV} show that, for small values of $\eps$, a nontrivial solution to \eqref{KPP} exists if, and only if, the first eigenvalue of some linear elliptic operator is negative and, in this case, they determine the precise asymptotics as $\eps\to0^+$.

The case $0\leq m<2$, however, turns out to be more involved. Berestycki \emph{et al.} proved (see \cite[Theorems 1.2-1.3]{BCV}) that if $0<\eps<\eps_1$ for some $\eps_1>0$, then there always exists a unique bounded, continuous, positive solution $u_\eps$ to \eqref{KPP}. However, its precise behavior as $\eps\to0^+$ is still an open problem. The best known result in this direction states that $u_\eps$ converges weakly towards some nonnegative function $v\in L^\infty(\R^N)$ solving
\begin{align*}
v(x)(a(x)-v(x))=0\text{ for all }x\in\R^N. 
\end{align*}
Unfortunately, the above equation admits infinitely many solutions, so it may happen that $v\equiv0$ (extinction) or that $v=a^+\mathds{1}_\omega$ for some $\omega\subset\mathrm{supp}(a^+)$ (persistence in a given area of the ecological niche). The goal of this paper is to complete this picture by showing that $v=a^+$ is the only possible solution, thus enforcing that \emph{short range dispersal strategies subject to sub-quadratic costs always yield persistence}.

\subsection{Assumptions and main results}
Throughout the paper, we shall assume that
\begin{align}
\left\{\begin{array}{l}
J\in L^1(\R^N)\text{ is nonnegative, radially symmetric,} \\
\text{with unit mass and finite $m$-th order moment}.
\end{array}\right. \label{HYP:Jweak}
\end{align}
For the convenience of the reader, we recall that the space $\mathring{B}_{p,\infty}^s(\R^N)$ with $s\in(0,2)$ and $1\leq p<\infty$, is the closure of $C_c^\infty(\R^N)$ in the Besov space $B_{p,\infty}^s(\R^N)$ or, equivalently,
$$ \mathring{B}_{p,\infty}^s(\R^N)=\left\{f\in L^p(\R^N)\ ;\ \lim_{|h|\to0}\hspace{0.1em}\frac{\|f(\cdot+h)-2f+f(\cdot-h)\|_{L^p(\R^N)}}{|h|^s}=0\right\}, $$
see e.g. \cite[Proposition 5.6]{JB}. By convention, we set $\mathring{B}_{p,\infty}^0(\R^N)=L^p(\R^N)$.
\smallskip

Our first main result states that short range dispersal strategies always yield persistence when $0\leq m<2$, thus answering an open question raised in \cite{BCV}. 

\begin{theo}\label{THEO}
Let $0\leq m<2$. Assume \eqref{HYP:A1} and \eqref{HYP:Jweak}. 
Then, there exists $\eps_0>0$ such that, for all $0<\eps<\eps_0$, \eqref{KPP} admits a unique positive minimal solution $u_\eps\in C_0(\R^N)$.
Moreover,
$$ \liminf_{\eps\to0^+}\hspace{0.1em}u_\eps(x)\geq a^+(x)\text{ for all }x\in\R^N. $$
\end{theo}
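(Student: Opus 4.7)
The plan is to combine the existence/uniqueness theorems of \cite{BCV} with a new comparison argument against a family of subsolutions converging to $a^+$. For $0<\eps<\eps_1$, the existence of a unique positive bounded continuous solution $u_\eps$ (which is then automatically minimal among positive solutions) is given by \cite[Theorems 1.2--1.3]{BCV}. To upgrade from boundedness to $u_\eps\in C_0(\R^N)$, I would use \eqref{HYP:A1}: pick $R,\mu>0$ with $a\le-\mu$ on $\{|x|\ge R\}$, build an explicit decaying supersolution (equal to $\|a^+\|_\infty$ on $B_R$ and exponentially small outside), and apply the nonlocal comparison principle.

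The heart of the proof is the liminf estimate, for which the plan is to construct, for each $\eta>0$, a subsolution $\phi_{\eta,\eps}$ of \eqref{KPP} valid for $\eps$ small and converging pointwise to $(a-\eta)^+$ as $\eps\to 0^+$. Uniqueness of the positive solution then yields $u_\eps\ge\phi_{\eta,\eps}$, and sending first $\eps\to 0^+$ and then $\eta\to 0^+$ gives $\liminf u_\eps\ge a^+$. A natural candidate is a smooth compactly supported $\phi_\eta\le(a-\eta)^+$; the key analytic input, exploiting the $m$-th moment assumption in \eqref{HYP:Jweak}, is the bound
\[
\left|\frac{1}{\eps^m}(J_\eps\ast\phi-\phi)(x)\right| \le C\bigl(\|\phi\|_\infty,\|D^2\phi\|_\infty\bigr)\int_{\R^N}J(z)|z|^m\,\mathrm{d}z,
\]
which I would prove by writing $J_\eps\ast\phi-\phi=\tfrac12\int J(z)[\phi(\cdot-\eps z)+\phi(\cdot+\eps z)-2\phi]\,\mathrm{d}z$ (using radial symmetry of $J$), splitting the integration at $|z|=1/\eps$, and interpolating via $\min(4\|\phi\|_\infty,\eps^2|z|^2\|D^2\phi\|_\infty)\le C\,\eps^m|z|^m$.

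With this bound in hand, verifying the subsolution condition reduces to a purely pointwise inequality of the form $\phi_\eta(a-\phi_\eta)\ge C(\phi_\eta)$ on $\{\phi_\eta>0\}$, which becomes delicate near $\partial\{\phi_\eta>0\}$: the reaction term vanishes there while the bound on the nonlocal term remains $O(1)$. Resolving this boundary layer is the crux of the argument and presumably where the ``reduction to a local problem'' advertised in the abstract takes place. The most natural resolution is to choose $\phi_\eta$ as the (extended-by-zero) positive solution of a local Fisher--KPP problem $-\mu\Delta w+w(w-a+\eta)=0$ on the component of $\{a>\eta\}$, whose classical Berestycki--Hamel--Rossi type asymptotics as $\mu\to 0^+$ give sharp control of the boundary layer; reconciling this with the $C^2$ smoothness required by the nonlocal estimate --- since the zero-extension is only Lipschitz across $\partial\{a>\eta\}$ --- is where I expect the main technical difficulty to lie, likely requiring a careful mollification near the boundary and possibly a separate treatment of the inner and outer regions of the nonlocal operator.
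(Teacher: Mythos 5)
The central gap is the boundary layer you correctly flag but do not resolve, and the route you sketch for it is neither carried out nor clearly viable. Your interpolation estimate on $\eps^{-m}(J_\eps*\phi-\phi)$ is two-sided and gives only an $\eps$-uniform $O(M_m(J))$ bound; even refining it to $o(1)$ (which the paper does on a fixed inner ball via \cite[Prop.~6.1]{JB}) cannot help at $\partial\{\phi_\eta>0\}$, where the reaction $\phi_\eta(a-\phi_\eta)$ vanishes with nothing left to absorb a negative nonlocal contribution. Invoking a local Fisher--KPP solution with vanishing-viscosity asymptotics and then mollifying is speculative, runs into the very $C^2$ obstruction you note, and by itself still gives no sign information near the free boundary. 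The paper's resolution is different and decisive: build $\underline{u}$ by convolving a bump with a compactly supported radial profile (essentially $\psi(r)=\max\{(1-r)^3,0\}$) which is \emph{subharmonic} outside a fixed inner ball. For radial $J$, subharmonicity yields a one-sided ``generalized mean value inequality'' $\int_{B_{R_\eps\eps}(x)}J_\eps(x-y)\underline{u}(y)\,\mathrm{d}y\ge \underline{u}(x)\int_{B_{R_\eps\eps}(x)}J_\eps(x-y)\,\mathrm{d}y$ in the outer region, and the tail $\R^N\setminus B_{R_\eps\eps}(x)$ is controlled by Markov's inequality using the finite $m$-th moment. Thus the nonlocal term already has the favorable sign precisely where the reaction is weak; quantitative smallness is only needed on a fixed inner ball where $\underline{u}$ is bounded below. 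This one-sided, sign-based argument is the key idea your proposal is missing.

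A second gap: you rely on \cite[Theorems~1.2--1.3]{BCV} for existence and uniqueness, but those are proved under hypotheses on $J$ strictly stronger than \eqref{HYP:Jweak}; the paper explicitly advertises extending existence/uniqueness to fat-tailed kernels not covered there, and achieves it by elementary monotone iteration on truncated problems on $B_R$ together with comparison principles, using precisely the sub-solution above as the starting iterate. As written, your argument would only establish the theorem under \cite{BCV}'s stronger assumptions. Relatedly, the theorem claims only a unique positive \emph{minimal} solution; full uniqueness among positive solutions requires the extra moment hypothesis of Theorem~\ref{THEO02}, so quoting a uniqueness result here would be proving more than is asserted and hence demanding more than the stated hypotheses give.
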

\begin{remark}
While we believe that our assumptions are close to being sharp, the existence of a counterexample is still an open question.
Nevertheless, we can prove that, \emph{if $0<m<2$ and if $J$ has an infinite $\beta$-th order moment for some $0<\beta<m$, then \eqref{KPP} does not admit uniform (with respect to $\eps$) sub-solutions controlled by $a^+$} (see Proposition~\ref{MOMENT}). This suggests that, if $J$ is too heavily tailed (i.e. if its $m$-th order moment is infinite), then the conclusion of Theorem~\ref{THEO} fails (i.e. either \eqref{KPP} admits no positive solution for $\eps$ small or the positive minimal solution vanishes asymptotically as $\eps\to0^+$). This would be consistent since $J$ is nothing but a probability density, so the heavier the tail, the more likely the individuals are to favor long range jumps, which seems incompatible with the rate of dispersal $1/\eps^m$ becoming arbitrarily large as the range of dispersal $\eps$ becomes arbitrarily small.
It is worth mentioning that, in similar contexts, fat-tailed kernels are known to induce a dramatically different behavior of the solutions, see e.g. \cite{Bouin,Garnier}.
It would be of interest to investigate this question further, as fat-tailed kernels are known to better account for the dispersal of individuals in various contexts (it is observed, for example, in river fishes \cite{Radinger}, in the tansy beetle \emph{Chrysolina graminis} \cite{Chapman} or in rapid plant migration \cite{Clark,Clark2}, see also \cite{Allen,Petrovskii}).
\end{remark}
%

It would be desirable to have the existence of a unique positive solution (rather than of a unique positive minimal solution) as well as more precise asymptotics. However, establishing this is quite delicate if the minimal solution $u_\eps$ is not known to be integrable. On the other hand, it can be shown that the decay of $u_\eps$ is intimately related to the tail of the kernel: roughly speaking, if $J$ has a finite $\beta$-th order moment, then the unique positive minimal solution decays as $|x|^{-\beta}$. Using this dichotomy, we show that it is possible to sharpen the conclusion of Theorem~\ref{THEO} up to a slight additional assumption on $J$. Precisely,

\begin{theo}\label{THEO02}
Let $0\leq m<2$. Assume \eqref{HYP:A1} and \eqref{HYP:Jweak}. Suppose, in addition, that $J$ has a finite $\beta$-th order moment for some $\beta>N$.
Then, there exists $\eps_0>0$ such that, for all $0<\eps<\eps_0$, \eqref{KPP} admits a unique bounded positive solution $u_\eps\in L^1\cap C_0(\R^N)$.
Moreover, if $a^+\in \mathring{B}_{1,\infty}^{m}(\R^N)$, then the solution $u_\eps$ to \eqref{KPP} converges almost everywhere to $a^+$ as $\eps\to0^+$.
\end{theo}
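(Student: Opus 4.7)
The plan is to refine Theorem~\ref{THEO} in three successive stages: $L^1$-integrability of the minimal solution, uniqueness among all bounded positive solutions, and almost everywhere convergence to $a^+$ as $\eps\to 0^+$. Theorem~\ref{THEO} already supplies, for $\eps\in (0,\eps_0)$, the positive minimal solution $u_\eps \in C_0(\R^N)$ and the pointwise liminf $\liminf_{\eps\to 0^+}u_\eps \geq a^+$.

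First I would prove the decay $u_\eps(x) \leq K\langle x\rangle^{-\beta}$ uniformly in $\eps$, which since $\beta > N$ yields $u_\eps \in L^1(\R^N)$. Using \eqref{HYP:A1}, pick $R_0$ with $a \leq -c_0 < 0$ on $\{|x|\geq R_0\}$ and try the super-solution ansatz $\bar v(x) = K\langle x\rangle^{-\beta}$ on the exterior. Splitting $L\bar v$ into a near-field part (controlled by Taylor expansion) and a far-field part (controlled by Chebyshev--Markov against the $\beta$-moment of $J$), and using also the $m$-moment from \eqref{HYP:Jweak}, one obtains $L\bar v/\bar v \to 0$ uniformly in $x$ on the exterior as $\eps\to 0^+$, so that
$$ L\bar v + \bar v(a - \bar v) \leq \bar v\bigl(-c_0 + o(1) - \bar v\bigr) \leq 0 $$
for $\eps$ small. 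Choosing $K$ large so that $\bar v$ also dominates the uniformly bounded $u_\eps$ on $\{|x|\leq 2R_0\}$, a nonlocal comparison principle yields the decay, and the same bound holds for any bounded positive solution.

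For uniqueness, let $v$ be another bounded positive solution. Minimality gives $w := v - u_\eps \geq 0$, and $w \in L^1 \cap L^\infty$ by Step 1; subtracting the two equations yields $Lw = w(u_\eps + v - a)$. Testing against $w$ and using the nonpositivity of the Dirichlet form of $L$,
$$ \int_{\R^N} w^2(u_\eps + v - a)\,dx \leq 0. $$
Theorem~\ref{THEO} gives $u_\eps + v \geq 2u_\eps \geq 2a^+ - o(1)$ pointwise, so $u_\eps + v - a \geq |a| - o(1)$; combined with the strict positivity of $u_\eps$ on the zero set of $|a|$ (through a strong maximum principle for the nonlocal operator), this forces $w \equiv 0$, i.e., $v = u_\eps$.

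For the a.e.\ convergence, the pivotal estimate is $\|La^+\|_{L^1} \to 0$ as $\eps\to 0^+$. Symmetrizing $L$ and rescaling give
$$ \|La^+\|_{L^1} \leq \tfrac{1}{2}\int_{\R^N} J(\zeta)\,\frac{\omega(\eps\zeta)}{|\eps\zeta|^m}\,|\zeta|^m\,d\zeta, \qquad \omega(h) := \|a^+(\cdot+h)+a^+(\cdot-h)-2a^+\|_{L^1}. $$
The hypothesis $a^+ \in \mathring{B}_{1,\infty}^m$ forces $\omega(h)/|h|^m \to 0$ at $h=0$, and dominated convergence against $|\zeta|^m J(\zeta)$ (integrable by \eqref{HYP:Jweak}) yields the claim. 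Since $a^+$ is compactly supported by \eqref{HYP:A1}, $e_\eps := u_\eps - a^+ \in L^1$; a direct computation exploiting $a^+(a-a^+)\equiv 0$ produces
$$ Le_\eps = (u_\eps + a^-)\,e_\eps - La^+. $$
The nonlocal Kato inequality $L|e_\eps| \geq \operatorname{sgn}(e_\eps)Le_\eps$, integrated (using $\int Lh\,dx = 0$ for $h\in L^1$), then yields
$$ \int_{\R^N}(u_\eps + a^-)\,|e_\eps|\,dx \leq \|La^+\|_{L^1} \longrightarrow 0. $$
On $\{a<0\}$ the weight $a^->0$ drives $u_\eps \to 0 = a^+$; on $\{a>0\}$ one gets $u_\eps|u_\eps-a|\to 0$ in $L^1$, which combined with the pointwise liminf of Theorem~\ref{THEO} forces $u_\eps \to a = a^+$. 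Thus $u_\eps\to a^+$ in $L^1$, and a.e.\ along subsequences of $\eps\to 0^+$. I expect the principal technical obstacle to be the first step: the uniform-in-$\eps$ super-solution construction is exactly where the $\beta$-moment with $\beta > N$ is indispensable, and its interaction with the scaling $1/\eps^m$ is delicate.
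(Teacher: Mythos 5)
Your three-stage plan matches the paper's (polynomial super-solution $\Rightarrow L^1$-integrability; integral identity for uniqueness; $\|La^+\|_{L^1}\to0$ for the asymptotics), and Steps~1 and~3 are essentially sound, the third being an agreeable variant based on the nonlocal Kato inequality $L|e_\eps|\geq\operatorname{sgn}(e_\eps)Le_\eps$ in place of the paper's direct multiplication by $v_\eps^+=(a^+-u_\eps)^+$; your identity $Le_\eps=(u_\eps+a^-)e_\eps-La^+$ and the weight-splitting into $\{a<0\}$ and $\{a>0\}$ are correct, and the final passage from $L^1$-convergence to a.e.\ convergence (via a subsequence, then removing the subsequence through the pointwise $\liminf$ of Theorem~\ref{THEO}) is the same mechanism the paper uses. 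In Step~1, your ``uniform in $\eps$'' decay is an overstatement --- the constants in the super-solution of Lemma~\ref{LE:SUPER} do depend on $\eps$ because the factor $\eps^{-m}$ in $\mathcal{M}_{\eps,m}$ is not absorbed by the tail estimate --- but this is harmless since only $u_\eps\in L^1$ at fixed $\eps$ is needed.

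Step~2 has a genuine gap. From $Lw=w(u_\eps+v-a)$ you test against $w$ and obtain $\int w^2(u_\eps+v-a)\,dx\le 0$, but this does not close: $u_\eps+v-a$ has no sign on $\{a>0\}$, and the fallback $u_\eps\ge a^+-o(1)$ is unavailable here. Theorem~\ref{THEO} gives only a pointwise-in-$x$ $\liminf$ as $\eps\to 0^+$, which is neither uniform in $x$ nor a quantitative bound at a \emph{fixed} $\eps\in(0,\eps_0)$; yet uniqueness is required for every such $\eps$. Moreover, even accepting $u_\eps+v\ge 2a^+-o(1)$, on $\{a>0\}$ this yields $u_\eps+v-a\ge a-o(1)$, which may be negative where $a$ is small but positive, so the ``strict positivity on the zero set of $|a|$'' does not rescue the integral inequality. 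The correct move is to test the equation $Lw=w(u_\eps+v-a)$ against $u_\eps$ rather than against $w$: by the symmetry of $L$ (and since $u_\eps\in L^1\cap L^\infty$, $w\in L^\infty$, which justifies the Fubini exchange), $\int u_\eps Lw=\int w Lu_\eps=-\int w\,u_\eps(a-u_\eps)$, while $\int u_\eps Lw=\int u_\eps w(u_\eps+v-a)$; adding the two gives $\int u_\eps v\, w\,dx=0$, which together with $u_\eps>0$, $v>0$ and $w\ge0$ forces $w\equiv0$. This is exactly the Green-type cancellation the paper uses, in the form $\int u_\eps v_\eps(v_\eps-u_\eps)\,dx=0$, and it works at every fixed $\eps$ with no asymptotic input.
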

\begin{remark}
We do not know whether it is possible to get rid of the assumption that $J$ has a finite $\beta$-th order moment for some $\beta>N$.
Note that if $N=1$ and $1<m<2$, then this last assumption is not needed, which might indicate that it is not necessary. However, the regularity assumption on $a^+$ emerges very naturally in proof, suggesting that it is sharp.
\end{remark}

Combining Theorem~\ref{THEO02} with \cite[Theorem 1.3]{BCV}, we obtain that if $0<m<2$, then $u_\eps\to a^+$ (at least pointwise) when both $\eps\to0^+$ and $\eps\to\infty$. This means that \emph{for both strategies $\eps\ll1$ and $\eps\gg1$ the population will tend to match the resource}, thus yielding persistence in any case. This highly contrasts with the cases $m=0$ and $m=2$. In the latter case, although we still have that $u_\eps\to a^+$ when $\eps\to\infty$, equation \eqref{KPP} may not even have positive solutions at all when $\eps$ is small, depending on the sign of the first eigenvalue of $(2N)^{-1}M_2(J)\hspace{0.1em}\Delta+a$ (which is merely a function of the resource $a$), where
$$ M_2(J)=\int_{\R^N}J(x)|x|^2\mathrm{d}x. $$
Otherwise said, depending on the precise form of $a$, the population may go extinct when the spread of dispersal $\eps$ is too small. When $m=0$, the situation is somehow ``opposite" to the case $m=2$, as persistence may not occur for long range dispersal strategies. This indicates that \emph{the persistence strategy of the population strongly depends on what it costs for the individuals to disperse}.


\subsection{Strategy of proof}\label{SE:strategy}
The main difficulty in establishing Theorems~\ref{THEO}-\ref{THEO02} lies in the lack of compactness of \eqref{KPP}. It is known that solutions to \eqref{KPP} satisfy
\begin{align}
\int_{\R^N}\int_{\R^N}\rho_\eps(x-y)\frac{|u_\eps(x)-u_\eps(y)|^2}{|x-y|^m}\mathrm{d}x\mathrm{d}y\leq C\text{ as }\eps\to0^+, \label{BBM}
\end{align}
where $\rho_\eps(z)=\eps^{-m}|z|^mJ_\eps(z)$, see \cite[Lemma 5.1(ii)]{BCV}. This inequality is in fact the key tool which allowed Berestycki \emph{et al.} to handle the case $m=2$. Indeed, \eqref{BBM} together with the recent characterisation of Sobolev spaces derived by Bourgain, Brezis, Mironescu \cite{BBM} and Ponce \cite{Ponce} implies that $(u_\eps)_{\eps>0}$ is relatively compact in $L_{\mathrm{loc}}^2(\R^N)$ and that it converges along a subsequence to some function $v\in L^2(\R^N)$ satisfying
$$ \lim_{\eps\to0^+}\int_{\R^N}\int_{\R^N}\rho_\eps(x-y)\frac{|v(x)-v(y)|^2}{|x-y|^2}\mathrm{d}x\mathrm{d}y\leq C, $$
which, by a result of Bourgain \emph{et al.} \cite[Theorem 2]{BBM}, is equivalent to saying that $v$ belongs to the Sobolev space $H^1(\R^N)$. Then, relying on standard elliptic theory it can be shown that $v$ is the unique nontrivial solution to
$$ \frac{M_2(J)}{2N}\hspace{0.1em}\Delta v+v(a-v)=0\text{ in }\R^N. $$
However, we have shown in \cite{JB} that, although the functional arising in \eqref{BBM} provides a characterisation of a fractional version of $H^1(\R^N)$ when $0<m<2$ (see \cite[Theorem 2.3]{JB}), \emph{it does not yield precompactness in $L_{\mathrm{loc}}^2(\R^N)$} (see \cite[Theorem 2.15]{JB}).

To overcome this lack of compactness, we need to rely on an entirely different approach.
The heart of our strategy is based on the construction of an appropriate sequence of sub-solutions.
The main idea behind our construction stems from the observation that if $\Omega\subset\R^N$ is a domain and $J$ is radial and compactly supported, then any function $\varphi\in C^2(\Omega)$ that is subharmonic in $\Omega$ (i.e. $\Delta\varphi\geq0$ in $\Omega$) satisfies the ``generalized mean value inequality":
$$ \varphi(x)\leq \int_{\R^N}J_\eps(x-y)\hspace{0.1em}\varphi(y)\hspace{0.1em}\mathrm{d}y, $$
for all $x\in\Omega$ and $\eps>0$ small enough. (The usual mean value inequality corresponds to the case where $J(z)=|B_1|^{-1}\mathds{1}_{B_1}(z)$.) Based on this observation, we prove that it is possible to construct
continuous global sub-solutions to \eqref{KPP} by considering functions which are subharmonic outside some ball around the origin and ``well-behaved" inside it.
In this way, we are able to \emph{reduce this nonlocal problem to a local one}.
Then, relying on Markov's inequality we show that this procedure works as well for general kernels having finite $m$-th order moment
(which, as discussed in the Appendix at the end of the paper, is close to being a sharp requirement for a ``good" sub-solution to exist).

This approach presents a considerable advantage since it yields \emph{simultaneously} existence, uniqueness
and asymptotic results \emph{without relying on the spectral theory for convolution operators}.
In particular, not only do we obtain an alternative proof of the existence and uniqueness results of \cite{BCV} using very little technology, but we even extend it to kernels which may be fat-tailed, i.e. to the class of kernels $J$ satisfying \eqref{HYP:Jweak}.


\subsection{Notations}\label{SE:NTN}

Let us list a few notations that will be used throughout the paper.

As usual, $\S^{N-1}$ denotes the unit sphere of $\R^N$ and $B_R(x)$ the open Euclidean ball of radius $R>0$ centred at $x\in\R^N$ (when $x=0$, we simply write $B_R$). The $N$-dimensional Hausdorff measure will be denoted by $\mathcal{H}^{N}$.
Given a function $f:\R^N\to\R$, we denote by $f^+$ (resp. $f^-$) its positive part (resp. negative part) given by $f^+=\max\{0,f\}$ (resp. $f^-=\max\{0,-f\}$).
The set-theoretic support of $f$ will be denoted by
$$ \mathrm{supp}(f):=\left\{x\in\R^N; f(x)\neq0\right\}, $$
and its essential support (that is, the complement of the union of all open sets on which $f$ vanishes almost everywhere) will be denoted by $\mathrm{ess}\,\mathrm{supp}(f)$.
Observe that, within these definitions, if $f$ is continuous then its essential support coincides with the \emph{closure} of its set-theoretic support.
Furthermore, for a measurable set $\Omega\subset\R^N$, we denote by $|\Omega|$ its Lebesgue measure and by $\mathds{1}_\Omega$ its characteristic function.
For $f\in L^1(\R^N)$ and $\beta\geq0$, we denote by
$$ M_{\beta}(f):=\int_{\R^N}f(x)|x|^{\beta}\mathrm{d}x, $$
the $\beta$-th order moment of $f$.
Lastly, we will denote by $C_0(\R^N)$ (resp. $C_c(\R^N)$) the space of continuous functions that vanish at infinity (resp. with compact support).


\section{Preliminaries}

In this section, we list some general qualitative/a priori results for the solutions to \eqref{KPP} (if any) which will be useful for later purposes.
\begin{lemma}[Strong maximum principle]\label{LE:SIGNE}
Let $0\leq m\leq2$. Assume that $a\in C(\R^N)$ and that $J\in L^1(\R^N)$ is a nonnegative radial kernel with unit mass. Suppose that \eqref{KPP} admits a nonnegative solution $u_\eps\in L^\infty(\R^N)$. Then, either $u_\eps>0$ a.e. in $\R^N$ or $u_\eps\equiv0$ a.e. in $\R^N$.
\end{lemma}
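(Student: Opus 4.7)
The plan is to suppose $u_\eps \not\equiv 0$ and show that the zero set $Z := \{x \in \R^N : u_\eps(x) = 0\}$ has Lebesgue measure zero, by extracting a translation-invariance of $Z$ from the equation and iterating it until Steinhaus' theorem forces $Z^c$ to equal $\R^N$.

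First, I would rearrange \eqref{KPP} as $J_\eps \ast u_\eps = u_\eps\bigl(1 - \eps^m a + \eps^m u_\eps\bigr)$ a.e. For a.e. $x \in Z$ the right-hand side vanishes, so $(J_\eps \ast u_\eps)(x) = 0$. Since the integrand $J_\eps(x - y)\,u_\eps(y)$ is nonnegative, this forces $u_\eps(y) = 0$ for a.e. $y$ with $J_\eps(x - y) > 0$. Writing $E := \mathrm{ess\,supp}(J)$ and using the radial symmetry of $J$ (so $E = -E$), this says: for a.e. $x \in Z$, $u_\eps = 0$ a.e. on $x + \eps E$.

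Second, I would iterate this invariance via Fubini. For each $n \ge 1$, this yields that for a.e. $x \in Z$ and a.e. $(e_1, \dots, e_n) \in E^n$, one has $u_\eps(x + \eps(e_1 + \dots + e_n)) = 0$. Equivalently, for a.e. $x \in Z$, $u_\eps$ vanishes a.e. on the $\eps$-scaled image of the sum map, whose distribution on $\R^N$ is absolutely continuous with density $\mathds{1}_E^{*n}$ (the $n$-fold convolution), which is continuous for $n \ge 2$ and has support equal to the $n$-fold Minkowski sum $nE$. Since $|E| > 0$ and $E = -E$, Steinhaus' theorem gives $2E = E + E \supset B_r(0)$ for some $r > 0$, and a simple induction yields $2kE \supset B_{kr}(0)$; in particular $\bigcup_n nE = \R^N$.

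Third, assuming for contradiction that $|Z| > 0$, I would pick $x_0 \in Z$ outside the countable union of null sets arising from the iteration. Then $u_\eps = 0$ a.e. on $B_{\eps k r}(x_0) \subset x_0 + \eps \cdot 2kE$ for every $k$, hence on all of $\R^N$, contradicting $u_\eps \not\equiv 0$. The main technical obstacle I expect is making the Fubini iteration rigorous uniformly in $n$, in particular ensuring that the accumulated exceptional sets remain null; the trick of encoding the reachable set at step $n$ via the convolution $\mathds{1}_E^{*n}$ rather than the raw Minkowski sum handles the measurability bookkeeping cleanly and keeps the argument essentially elementary.
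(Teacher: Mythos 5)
Your proposal follows essentially the same route as the paper's proof: both propagate the zero set of $u_\eps$ through the (essential) support of $J_\eps$, invoke Steinhaus-type reasoning to show the twofold sumset fills a ball around the origin — in the paper, via the continuity and positivity at $0$ of $\mathds{1}_{\Lambda_\eps}\ast\mathds{1}_{\Lambda_\eps}$, which is precisely the standard proof of Steinhaus' theorem — and then iterate to cover $\R^N$. The Fubini/null-set bookkeeping you single out as the main obstacle is handled in the paper by the same convolution-of-indicators device applied at a single well-chosen Lebesgue point, so the two arguments are substantively identical.
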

\begin{remark}
For related results in the nonlocal framework, the reader may consult \cite{Cov08}.
\end{remark}
\begin{proof}
Let $u_\eps\in L^\infty(\R^N)$ be a nonnegative solution to \eqref{KPP} and suppose that $u_\eps$ admits a Lebesgue point $x_0\in\R^N$ such that $u_\eps(x_0)=0$. Using the equation satisfied by $u_\eps$ it follows that $u_\eps(y)=0$ for a.e. $y\in x_0+\mathrm{ess}\,\mathrm{supp}(J_\eps)$. By iteration, we find that
\begin{align}
u_\eps(y)=0 \text{ for a.e. }y\in x_0+\mathrm{ess}\,\mathrm{supp}(J_\eps)+\mathrm{ess}\,\mathrm{supp}(J_\eps). \label{iter:2:eps}
\end{align}
Let $R>0$ be such that $\Lambda_\eps:=\mathrm{ess}\,\mathrm{supp}(J_\eps)\cap \overline{B_{R}}$ has positive Lebesgue measure.
Since the function $G_\eps$ given by $G_\eps(x):=\mathds{1}_{\hspace{0.1em}\Lambda_\eps}\ast\mathds{1}_{\hspace{0.1em}\Lambda_\eps}(x)$ is continuous and since, on the other hand, $G_\eps(0)=|\Lambda_\eps|>0$, we deduce that there is some $\delta>0$ such that
$$ B_\delta\subset\mathrm{ess}\,\mathrm{supp}(G_\eps)\subset \Lambda_\eps+\Lambda_\eps\subset\mathrm{ess}\,\mathrm{supp}(J_\eps)+\mathrm{ess}\,\mathrm{supp}(J_\eps). $$
Plugging this in \eqref{iter:2:eps}, we obtain that $u_\eps(y)=0 \text{ for a.e. }y\in B_\delta(x_0)$.
By induction, we find that $u_\eps(y)=0$ for a.e. $y\in B_{\delta k}(x_0)$ and all $k\in\N\setminus\{0\}$. Hence, $u_\eps(y)=0$ for a.e. $y\in\R^N$.
This enforces that either $u_\eps>0$ a.e. in $\R^N$ or $u_\eps\equiv0$ a.e. in $\R^N$.
\end{proof}
We will also need the following lower bound when $m=0$.
\begin{lemma}\label{LE:LOWER}
Let $m=0$. Assume that $a\in C(\R^N)$ and that $J\in L^1(\R^N)$ is a nonnegative radial kernel with unit mass. Suppose that \eqref{KPP} admits a nontrivial nonnegative solution $u_\eps\in L^\infty(\R^N)$. Then, the following estimate holds
\begin{align*}
u_\eps(x)>(a(x)-1)^+\text{ for a.e. }x\in\R^N.
\end{align*}
\end{lemma}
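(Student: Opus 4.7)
The proof should be an elementary manipulation of the equation, using only Lemma~\ref{LE:SIGNE} as input.

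The plan is to recast the stationary equation in ``convolution form'' and then read off the inequality directly. Since $m=0$ and $J$ has unit mass, equation \eqref{KPP} can be rewritten as
\begin{equation*}
(J_\eps\ast u_\eps)(x) - u_\eps(x) + u_\eps(x)\bigl(a(x)-u_\eps(x)\bigr)=0,
\end{equation*}
that is,
\begin{equation*}
(J_\eps\ast u_\eps)(x) = u_\eps(x)\bigl(1-a(x)+u_\eps(x)\bigr)\quad\text{for a.e. }x\in\R^N.
\end{equation*}

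Next, I would invoke Lemma~\ref{LE:SIGNE}: since $u_\eps$ is a nontrivial, nonnegative solution, we have $u_\eps>0$ a.e. Because $J_\eps\geq 0$ has unit mass and $u_\eps>0$ on a set of full measure, the convolution $J_\eps\ast u_\eps$ is strictly positive everywhere (for every $x$, the integrand $J_\eps(x-y)u_\eps(y)$ is nonnegative and positive on a set of positive measure). Hence the right-hand side above is also strictly positive a.e., and dividing by $u_\eps(x)>0$ gives
\begin{equation*}
1-a(x)+u_\eps(x) > 0\quad\text{for a.e. }x\in\R^N,
\end{equation*}
i.e. $u_\eps(x)>a(x)-1$ a.e.

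Finally, combining this with the bound $u_\eps(x)>0$ a.e. (again from Lemma~\ref{LE:SIGNE}) yields $u_\eps(x)>\max\{0,a(x)-1\}=(a(x)-1)^+$ a.e., which is exactly the claim. There is no real obstacle here: the whole argument hinges on rewriting the equation in the form $(J_\eps\ast u_\eps)=u_\eps(1-a+u_\eps)$ and noting that strict positivity of the left-hand side, already guaranteed by Lemma~\ref{LE:SIGNE}, forces the quadratic factor on the right to be strictly positive as well.
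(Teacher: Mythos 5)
Your proof is correct and takes essentially the same route as the paper: both rest on the identity $J_\eps\ast u_\eps = u_\eps(1-a+u_\eps)$ and Lemma~\ref{LE:SIGNE}. The only difference is that the paper argues by contradiction (supposing $a(x_0)\geq 1+u_\eps(x_0)$ at some Lebesgue point $x_0$, forcing $J_\eps\ast u_\eps(x_0)=0$ and then $u_\eps\equiv 0$ via the dichotomy of Lemma~\ref{LE:SIGNE}), whereas you argue directly that $u_\eps>0$ a.e.\ forces $J_\eps\ast u_\eps>0$ everywhere and then read off the inequality — a slightly cleaner contrapositive of the same idea.
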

\begin{proof}
Suppose, by contradiction, that $u_\eps$ admits a Lebesgue point $x_0\in\R^N$ such that $a(x_0)\geq 1+u_\eps(x_0)$. Then, it follows from the equation satisfied by $u_\eps$ that
$$ 0=J_\eps\ast u_\eps(x_0)-u_\eps(x_0)+u_\eps(x_0)(a(x_0)-u_\eps(x_0))\geq J_\eps\ast u_\eps(x_0)\geq0. $$
Thus, $u_\eps(y)=0$ for a.e. $y\in x_0+\mathrm{ess}\,\mathrm{supp}(J_\eps)$. By Lemma~\ref{LE:SIGNE}, we find that $u_\eps(y)=0$ for a.e. $y\in\R^N$, which contradicts the fact that $u_\eps$ is nontrivial. Hence $u_\eps(x)>(a(x)-1)$ for a.e. $x\in\R^N$. The conclusion now follows from the fact that $u_\eps>0$ a.e. in $\R^N$. 
\end{proof}

Lastly, we state a regularity result for solutions to \eqref{KPP} which will play an important role in the sequel.

\begin{prop}\label{LE:REG:POS}
Let $0\leq m\leq2$. Assume \eqref{HYP:A1} and that $J\in L^1(\R^N)$ is a nonnegative radial kernel with unit mass.
Suppose that \eqref{KPP} admits a nonnegative solution $u_\eps\in L^\infty(\R^N)$ for all $0<\eps<\eps_0$ and some $\eps_0>0$.
Then, there exists $\eps_{m,a^+}>0$ such that, for all $0<\eps<\min\{\eps_0,\eps_{m,a^+}\}$, we have $u_\eps\in C_0(\R^N)$.
Moreover, either $u_\eps>0$ or $u_\eps\equiv0$ in $\R^N$.
\end{prop}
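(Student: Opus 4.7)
The plan is to exploit the fact that, at each point $x$, equation \eqref{KPP} is a quadratic in $u_\eps(x)$, the nonlocal part entering only through the convolution $J_\eps\ast u_\eps$. Solving this quadratic yields a closed-form expression for $u_\eps(x)$, from which continuity follows immediately, and combined with \eqref{HYP:A1} it also gives decay at infinity.

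\emph{$L^\infty$-bound and continuity.} Setting $M:=\|u_\eps\|_\infty$ and choosing $(x_n)$ with $u_\eps(x_n)\to M$, \eqref{KPP} yields
\[
u_\eps(x_n)(u_\eps(x_n)-a(x_n))=\eps^{-m}(J_\eps\ast u_\eps(x_n)-u_\eps(x_n))\le\eps^{-m}(M-u_\eps(x_n))\longrightarrow0,
\]
whose left-hand side has liminf at least $M(M-\|a^+\|_\infty)$, forcing $M\le\|a^+\|_\infty$. Next, rewriting \eqref{KPP} as $\eps^m u_\eps^2+(1-\eps^m a)u_\eps-J_\eps\ast u_\eps=0$ and selecting the nonnegative root gives
\[
u_\eps(x)=\frac{-(1-\eps^m a(x))+\sqrt{(1-\eps^m a(x))^2+4\eps^m\,J_\eps\ast u_\eps(x)}}{2\eps^m}\quad\text{a.e.\ }x\in\R^N.
\]
Since $J_\eps\in L^1(\R^N)$ and $u_\eps\in L^\infty(\R^N)$, the convolution $J_\eps\ast u_\eps$ is uniformly continuous on $\R^N$ (continuity of translation in $L^1$), and $a\in C(\R^N)$, so the right-hand side is continuous in $x$. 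Taking this as the representative of $u_\eps$, the equation is then satisfied pointwise by continuity of both sides.

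\emph{Decay at infinity.} By \eqref{HYP:A1}, there exist $R,\delta>0$ with $a(x)\le-\delta$ for $|x|\ge R$. The equation rewrites as $u_\eps(x)(1+\eps^m(u_\eps(x)-a(x)))=J_\eps\ast u_\eps(x)$, so on $\{|x|\ge R\}$, using $u_\eps\ge0$ and $-a\ge\delta$,
\[
u_\eps(x)\le\alpha\,J_\eps\ast u_\eps(x),\qquad\alpha:=(1+\eps^m\delta)^{-1}<1.
\]
Set $L:=\limsup_{|x|\to\infty}u_\eps(x)$. For $\rho>0$, pick $R_\rho$ such that $u_\eps(y)\le L+\rho$ for $|y|\ge R_\rho$; splitting the convolution gives, for $|x|>R_\rho$,
\[
J_\eps\ast u_\eps(x)\le L+\rho+\|a^+\|_\infty\!\int_{|z|>|x|-R_\rho}\!\!J_\eps(z)\,\mathrm{d}z,
\]
where the last integral vanishes as $|x|\to\infty$ since $J_\eps\in L^1$. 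Passing to the limsup, then letting $\rho\to0$, yields $L\le\alpha L$, hence $L=0$. Combined with continuity this gives $u_\eps\in C_0(\R^N)$. The dichotomy $u_\eps>0$ or $u_\eps\equiv0$ then follows from Lemma~\ref{LE:SIGNE}: if $u_\eps(x_0)=0$ at some $x_0$ then evaluating \eqref{KPP} at $x_0$ forces $J_\eps\ast u_\eps(x_0)=0$, hence $u_\eps\equiv0$ a.e.\ on $x_0+\mathrm{supp}(J_\eps)$, and the iteration used in the proof of Lemma~\ref{LE:SIGNE} extends this to $u_\eps\equiv0$ on $\R^N$.

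\emph{Main obstacle.} The most delicate step is the decay at infinity: the pointwise contraction $u_\eps\le\alpha J_\eps\ast u_\eps$ only holds outside a large ball, and the convolution tail must be controlled uniformly even though $J$ may have unbounded support. It is precisely the integrability of $J$ that ensures $\int_{|z|>r}J_\eps(z)\,\mathrm{d}z\to0$ as $r\to\infty$, which is what closes the argument. The threshold $\eps_{m,a^+}$ plays only a convenience role (one may for instance choose it so that $\eps_{m,a^+}^m\|a^+\|_\infty<1$, making the quadratic formula above most transparent).
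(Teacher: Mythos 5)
Your proof follows essentially the same route as the paper: write \eqref{KPP} pointwise as a quadratic in $u_\eps(x)$ whose only dependence on $u_\eps$ off the diagonal is through the uniformly continuous convolution $J_\eps\ast u_\eps$, invert the quadratic to obtain a continuous representative, then derive decay from the sign of $a$ outside a large ball. Your explicit ``$+$ root'' formula is precisely the inverse $\Theta_{x,\eps}$ of $H_\eps(x,\cdot)$ that the paper introduces; you have merely written it out, which is arguably cleaner.

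There is, however, a gap in the continuity step when $m=0$. Writing the equation as $\eps^m t^2+(1-\eps^m a(x))t-J_\eps\ast u_\eps(x)=0$, the ``nonnegative root'' you select is unique whenever $1-\eps^m a(x)\ge 0$ (the sum of roots is then $\le 0$ and the product is $\le 0$, so at most one root is $\ge 0$) or whenever $J_\eps\ast u_\eps(x)>0$ (the product is then strictly negative). But if $1-\eps^m a(x)<0$ and $J_\eps\ast u_\eps(x)=0$, the two roots are $0$ and $a(x)-\eps^{-m}>0$, both nonnegative, and the ``$+$'' formula picks the nonzero one, which need not equal $u_\eps(x)$. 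Your remedy $\eps_{m,a^+}^m\|a^+\|_\infty<1$ rules this out when $0<m\le 2$, but for $m=0$ it degenerates to the condition $\|a^+\|_\infty<1$, which is a hypothesis on $a$, not a choice of $\eps_{m,a^+}$. So as written, the selection of the root is not justified for $m=0$ with $\|a^+\|_\infty\ge 1$.

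The fix is short and uses tools you already invoke later: by Lemma~\ref{LE:SIGNE}, either $u_\eps\equiv0$ a.e.\ (nothing to prove) or $u_\eps>0$ a.e., in which case $J_\eps\ast u_\eps(x)>0$ for \emph{every} $x$ (since $J_\eps\ge 0$ with unit mass), so the product of the roots is strictly negative at every point and the ``$+$'' root is the unique nonnegative one. The paper achieves the same thing differently at $m=0$ via Lemma~\ref{LE:LOWER}, which gives $u_\eps>(a-1)^+$ a.e.\ and hence places $u_\eps(x)$ on the branch where $H_\eps(x,\cdot)$ is strictly increasing, without requiring any smallness of $\eps$. You should state the appeal to Lemma~\ref{LE:SIGNE} (or Lemma~\ref{LE:LOWER}) \emph{before} applying the quadratic formula. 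The $L^\infty$ bound, the decay at infinity, and the final dichotomy are all correct as written.
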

\begin{remark}
We mention that related results in similar contexts may be found in \cite{BaZh,Cantrell}, although our arguments here are more direct as they do not require any incursion through Krein-Rutman theory nor do they require any \emph{reductio ad absurdum} argument.
\end{remark}
\begin{proof}
We already know (from Lemma~\ref{LE:SIGNE}) that either $u_\eps\equiv0$ or $u_\eps>0$ a.e. in $\R^N$. Hence, it suffices to prove that if $u_\eps$ is nontrivial, then it admits a representative in its class of equivalence that belongs to $C_0(\R^N)$. So let $u_\eps\in L^\infty(\R^N)$ be a nonnegative nontrivial solution to \eqref{KPP}. Let us first prove that $u_\eps$ can be redefined up to a negligible set as a continuous function in $\R^N$. For it, let us set $H_\eps(x,s):=s(1-\eps^m[a(x)-s])$. Since $u_\eps$ solves \eqref{KPP} a.e. in $\R^N$, there is then a null set $\mathscr{N}\subset\R^N$ such that, for all $x,y\in\R^N\setminus\mathscr{N}$,
\begin{align}
H_\eps(x,u_\eps(x))-H_\eps(y,u_\eps(y))=\int_{\R^N}\left[J_\eps(x-z)-J_\eps(y-z)\right]u_\eps(z)\hspace{0.1em}\mathrm{d}z=:\mathcal{J}_\eps(x,y). \label{gxy2}
\end{align}
Since $J_\eps\in L^1(\R^N)$, $u_\eps\in L^\infty(\R^N)$, the function $\mathcal{J}_\eps$ defined by the right-hand side of \eqref{gxy2} can actually be defined in $\R^N\times\R^N$ and it is uniformly continuous in $\R^N\times\R^N$ (due to the continuity of translations in $L^1$).

Let us first consider the case $m=0$. As will be explained later on, the case $0<m<2$ is actually simpler and will follow from the same type of arguments. Now, since
$$ \partial_sH_\eps(x,s)=1-\eps^m[a(x)-2s]=1+2s-a(x)>0\text{ for all } s\in\left(\frac{1}{2}(a(x)-1)^+,\infty\right), $$
since $H_\eps(x,\frac{1}{2}(a(x)-1)^+)=-\frac{1}{4}[(a(x)-1)^+]^2$ and $H_\eps(x,\infty)=\infty$ for all $x\in\R^N$,
and since we have $H_\eps\big(x,(\frac{1}{2}(a(x)-1)^+,\infty)\big)=\big(\!-\frac{1}{4}[(a(x)-1)^+]^2,\infty\big)$ for all $x\in\R^N$, it follows that
the map $H_\eps(x,\cdot)$ defines a homeomorphism from $\big[\frac{1}{2}(a(x)-1)^+,\infty\big)$ to $\big[\!-\frac{1}{4}[(a(x)-1)^+]^2,\infty\big)$ for all $x\in\R^N$ and $\eps>0$.
Let us denote by $\Theta_{x,\eps}:\big[\!-\frac{1}{4}[(a(x)-1)^+]^2,\infty\big)\to\big[\frac{1}{2}(a(x)-1)^+,\infty\big)$ its reciprocal, that is, $\Theta_{x,\eps}(H_\eps(x,t))=t$ for all $\eps>0$, $x\in\R^N$ and $t\in\big[\frac{1}{2}(a(x)-1)^+,\infty\big)$.

Fix $y_0\in\R^N\setminus\mathscr{N}$. For every $x\in\R^N\setminus\mathscr{N}$, we have, by Lemma~\ref{LE:LOWER}, that $u_\eps(x)\in\big((a(x)-1)^+,\infty\big)\subset\big[\frac{1}{2}(a(x)-1)^+,\infty\big)$, hence~\eqref{gxy2} yields
$$ H_\eps(y_0,u(y_0))+\mathcal{J}_\eps(x,y_0)=H_\eps(x,u_\eps(x))\in \left[-\frac{1}{4}[(a(x)-1)^+]^2, \infty\right).$$
Since the function $x\mapsto H_\eps(y_0,u_\eps(y_0))+\mathcal{J}_\eps(x,y_0)$ is continuous (in the whole space $\R^N$), since $H_\eps$ is itself continuous in $\R^N\times\R$ and since $\mathscr{N}$ is negligible, it follows that $H_\eps(y_0,u_\eps(y_0))+\mathcal{J}_\eps(x,y_0)\in\big[\!-\frac{1}{4}[(a(x)-1)^+]^2,\infty\big)$ for all $x\in\R^N$. Thus, we are allowed to define
$$ \widetilde{u}_\eps(x)=\Theta_{x,\eps}\big(H_\eps(y_0,u_\eps(y_0))+\mathcal{J}_\eps(x,y_0)\big) \text{ for }x\in\R^N.$$
By~\eqref{gxy2}, one has $\widetilde{u}_\eps=u_\eps$ in $\R^N\setminus\mathscr{N}$. Furthermore, $\widetilde{u}_\eps$ is continuous in $\R^N$ owing to its definition, since $\mathcal{J}_\eps$ is continuous in $\R^N\times\R^N$ and $(x,s)\mapsto \Theta_{x,\eps}(s)$ is continuous in the set $\big\{(x,s)\in \R^N\times\R;\,s\in[-\frac{1}{4}[(a(x)-1)^+]^2,\infty)\big\}$. Even if it means redefining $u_\eps$ by $\widetilde{u}_\eps$ in $\R^N$, it follows that $u_\eps$ is continuous in $\R^N$ and that~\eqref{gxy2} (resp. \eqref{KPP}) holds, by continuity, for all $x,y\in\R^N$ (resp. for all $x\in\R^N$).

The case $0<m\leq2$ is similar but technically simpler since $\partial_sH_\eps(x,s)=1-\eps^m[a(x)-2s]>0$ for all $0<\eps<\|a^+\|_{\infty}^{-1/m}$ and all $s\in[0,\infty)$. This means that, for all $x\in\R^N$ and all $0<\eps<\|a^+\|_{\infty}^{-1/m}$, the map $H_\eps(x,\cdot)$ defines a diffeomorphism from $[0,\infty)$ to $[0,\infty)$. From here, with the same arguments as above, we deduce that $u_\eps$ may be redefined up to a negligible set as a continuous function in $\R^N$.

Lastly, let us show that $\limsup_{|x|\to\infty}u_\eps(x)=0$. To this end, we notice that since $\limsup_{|x|\to\infty}a(x)<0$ (by assumption), there exists then $R>0$ such that $a(x)\leq0$ for any $x\in\R^N\setminus B_R$. Whence, it follows from the equation satisfied by $u_\eps$ that
%
\begin{align}
J_\eps\ast u_\eps(x)\geq\eps^m\hspace{0.1em}u_\eps(x)^2+u_\eps(x)\geq u_\eps(x) \text{ for all }x\in\R^N\setminus B_R. \label{bla}
\end{align}
But since $u_\eps\in L^\infty(\R^N)$, we may apply the reverse Fatou inequality so to obtain
$$ 0\leq M:=\limsup_{|x|\to\infty}\hspace{0.1em} u_\eps(x)\leq \limsup_{|x|\to\infty}\int_{\R^N}J_\eps(y)\hspace{0.1em}u_\eps(x-y)\hspace{0.1em}\mathrm{d}y\leq M. $$
Passing to the limit superior in \eqref{bla}, we find that $M\geq \eps^m\hspace{0.1em}M^2+M$. Therefore $M=0$, which thereby completes the proof of Proposition~\ref{LE:REG:POS}.
\end{proof}


\section{Construction of sub- and super-solution}\label{SE:SUB}

This section is devoted to the construction of global sub- and super-solution to \eqref{KPP}.

\begin{lemma}\label{LE:SUBSOL}
Let $0\leq m<2$. Assume \eqref{HYP:A1} and \eqref{HYP:Jweak}. Then, for all $\theta\in(0,1)$ and all $z\in\mathrm{supp}(a^+)$, there is a neighborhood $V_{z,\theta}\subset\mathrm{supp}(a^+)$ of $z$, a number $\eps_{z,\theta}>0$ and a nonnegative function $\underline{u}^{z,\theta}\in C_c(\R^N)$ satisfying
$$ \mathrm{supp}(\underline{u}^{z,\theta})= V_{z,\theta},\ \underline{u}^{z,\theta}(z)\geq(1-\theta)\hspace{0.1em}a^+(z) \text{ and }\underline{u}^{z,\theta}(x)<a^+(x)\text{ for all }x\in V_{z,\theta}, $$
and such that $\underline{u}^{z,\theta}$ is a sub-solution to \eqref{KPP} for all $0<\eps<\eps_{z,\theta}$.
\end{lemma}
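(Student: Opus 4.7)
Fix $z\in\mathrm{supp}(a^+)$ and $\theta\in(0,1)$, set $\mu:=(1-\theta)a^+(z)>0$, and use the continuity of $a$ to select $r_0>0$ so that $\overline{B_{r_0}(z)}\subset\mathrm{supp}(a^+)$ and $a(x)\geq(1-\theta/2)a^+(z)>\mu$ for $|x-z|\leq r_0$. My plan is to take $V_{z,\theta}=B_R(z)$ for some $0<R\leq r_0$ together with an auxiliary inner radius $0<\delta<R$, and to define a radial candidate that is constant on an inner plateau and radially harmonic on an outer annulus:
\[ \underline{u}^{z,\theta}(x):=\mu\,\Psi(|x-z|),\qquad \Psi(r)=\begin{cases}1,&r\leq\delta,\\ \psi_h(r),&\delta\leq r\leq R,\\ 0,&r\geq R,\end{cases} \]
where $\psi_h$ is the unique radial harmonic profile on $(\delta,R)$ with $\psi_h(\delta)=1$ and $\psi_h(R)=0$ (namely $\psi_h(r)=(r^{2-N}-R^{2-N})/(\delta^{2-N}-R^{2-N})$ for $N\geq3$, with the analogous linear or logarithmic formulas for $N=1,2$). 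By design $\underline{u}^{z,\theta}\in C_c(\R^N)$ is nonnegative, has set-theoretic support equal to $V_{z,\theta}$, takes the value $\mu=(1-\theta)a^+(z)$ at $z$, and satisfies $\underline{u}^{z,\theta}\leq\mu<a^+$ throughout $V_{z,\theta}$, so all the pointwise properties listed in the lemma are already built into the construction.

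The central analytic tool is a Markov-enhanced generalized mean-value inequality: whenever $\varphi\geq0$ is bounded and satisfies the mean-value inequality $\varphi(x_0)\leq|\S^{N-1}|^{-1}\int_{\S^{N-1}}\varphi(x_0+r\omega)\,d\omega$ for all $r\in(0,d)$, a polar decomposition of the convolution combined with $\int_{|y|>d}J_\eps(y)\,dy\leq\eps^m M_m(J)/d^m$ gives
\[ J_\eps\ast\varphi(x_0)\;\geq\;\varphi(x_0)\Bigl(1-\eps^m M_m(J)/d^m\Bigr). \]
I would apply this directly to $\underline{u}^{z,\theta}$ on the annulus $\delta<|x-z|<R$, where $\Psi$ is harmonic, and apply the same principle to the nonnegative function $\mu-\underline{u}^{z,\theta}$ on the plateau $|x-z|<\delta$, where the integrand vanishes whenever $x-y\in B_\delta(z)$ so Markov yields $\mu-J_\eps\ast\underline{u}^{z,\theta}(x)\leq\mu\,\eps^m M_m(J)/(\delta-|x-z|)^m$.

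With these estimates in hand I would verify the sub-solution inequality $\tfrac{1}{\eps^m}(J_\eps\ast\underline{u}^{z,\theta}-\underline{u}^{z,\theta})+\underline{u}^{z,\theta}(a-\underline{u}^{z,\theta})\geq0$ pointwise by a three-region split. The region $|x-z|\geq R$ is trivial since $\underline{u}^{z,\theta}\equiv0$ and $J_\eps\ast\underline{u}^{z,\theta}\geq0$. On the annulus, the first bound above gives $\tfrac{1}{\eps^m}(J_\eps\ast\underline{u}^{z,\theta}-\underline{u}^{z,\theta})(x)\geq-\underline{u}^{z,\theta}(x)\,M_m(J)/d(x)^m$ with $d(x)=\mathrm{dist}(x,\partial B_R\cup\partial B_\delta)$. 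On the plateau, the second bound gives $\tfrac{1}{\eps^m}(J_\eps\ast\underline{u}^{z,\theta}-\underline{u}^{z,\theta})(x)\geq-\mu\,M_m(J)/(\delta-|x-z|)^m$. In both cases the reaction term satisfies $\underline{u}^{z,\theta}(a-\underline{u}^{z,\theta})\geq\underline{u}^{z,\theta}\cdot(\theta/2)a^+(z)$, so choosing $\delta$ and $R-\delta$ both large enough to comfortably dominate $[2M_m(J)/(\theta a^+(z))]^{1/m}$, which is compatible with $R\leq r_0$, closes the inequality away from thin neighborhoods of $\partial B_\delta$ and $\partial B_R$.

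The hard part will be the transition layer around $\partial B_\delta$: there $\Psi$ has a downward kink that breaks the spherical-average subharmonicity, and both estimates above degenerate as $|x-z|\to\delta$. I plan to resolve this by a refined accounting based on $1-\psi_h(\delta+s)=O(s/\delta)$ for $s\downarrow0$, which shows that the outer-side portion of the loss carries an extra factor of $s/\delta$ and is therefore controlled by $\eps^m M_m(J)/\delta^m$ in the layer of width $\sim\eps$, together with the observation that $\mu(a-\mu)$ stays uniformly bounded below by a positive constant across the layer. Taking $\eps_{z,\theta}$ small enough to swallow the resulting $O(\eps^m)$ residual then completes the pointwise verification; the kink at $\partial B_R$ is comparatively easier, since its jump $[\Psi']\geq0$ is favorable to subharmonicity and a continuity argument using $J_\eps\ast\underline{u}^{z,\theta}>0=\underline{u}^{z,\theta}$ on $\partial B_R$ suffices.
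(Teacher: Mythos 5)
The proposal has a genuine gap that the paper's own construction is specifically designed to avoid, and the ``refined accounting'' you sketch for the inner kink does not repair it.

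Your candidate $\underline{u}^{z,\theta}=\mu\,\Psi(|x-z|)$ is only Lipschitz: it has a concave (``superharmonic'') kink on $\partial B_\delta(z)$, where the radial derivative jumps from $0$ to $\psi_h'(\delta^+)<0$. The distributional Laplacian therefore carries a \emph{negative} surface measure on $\partial B_\delta(z)$, and the mean-value inequality you rely on simply fails for small spheres centred near that sphere: at $x_0\in\partial B_\delta(z)$ the average of $\underline{u}^{z,\theta}$ over $\partial B_r(x_0)$ is strictly below $\underline{u}^{z,\theta}(x_0)=\mu$, since half the sphere sits in the annulus where $\underline{u}^{z,\theta}<\mu$. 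Concretely, at such an $x_0$ one has $\mu-\underline{u}^{z,\theta}(x_0-y)\asymp \alpha\,(y\cdot\nu)^{+}$ for small $|y|$, with $\alpha:=|\psi_h'(\delta^+)|>0$, so the second difference $\Delta^2_y\underline{u}^{z,\theta}(x_0)$ is of size $\alpha|y|$, \emph{not} $O(|y|^2)$. As a result, for $1<m<2$ the nonlocal term satisfies
\begin{equation*}
\frac{1}{\eps^m}\bigl|J_\eps\ast\underline{u}^{z,\theta}(x_0)-\underline{u}^{z,\theta}(x_0)\bigr|\ \gtrsim\ \alpha\,\eps^{1-m}\!\int_{\R^N}J(y)|y|\,\mathrm{d}y\ \longrightarrow\ \infty\quad\text{as }\eps\to0^+,
\end{equation*}
(the first moment $M_1(J)$ is finite by interpolation between $M_0$ and $M_m$), which cannot be compensated by the bounded reaction term $\mu(a(x_0)-\mu)$. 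Thus the sub-solution inequality fails on the kink for any $\eps$ small, and no $\eps_{z,\theta}$ exists. For $m\le 1$ the loss stays $O(1)$ but is comparable to $\alpha^m\mu^{1-m}M_m(J)$, and $\alpha$ is bounded below by a constant times $1/r_0$: your assertion that ``$\delta$ and $R-\delta$ both large enough\dots is compatible with $R\le r_0$'' is not justified, because $r_0$ is dictated by the modulus of continuity of $a$ near $z$, independently of $J$, $m$, $\theta$.

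What the paper does differently, and why it works: instead of a piecewise-defined profile with a raw kink, it builds $\underline{u}=\eta\ast\varphi_{R/2}$ with $\eta\in C_c^2$, so that $\underline{u}\in C_c^2(\R^N)$ globally. The unfavorable geometry (the region where subharmonicity cannot hold) is then confined to a compact ball where $\underline{u}$ is bounded below, and the estimate there hinges precisely on $C^2$ regularity: $\|\Delta^2_y\underline{u}\|_\infty\le\|D^2\underline{u}\|_\infty|y|^2$, hence the nonlocal term is $O(\eps^{2-m})\to0$ for \emph{every} $0\le m<2$, regardless of the constants $R,\theta,a,J$. That is exactly the cushion your construction lacks. (Your use of Markov's inequality to control the tail, and the splitting into an outer subharmonic region and an inner plateau, mirror the paper's structure; the paper also uses a truncated profile $\min\{\Psi,\psi(\varkappa)\}$ with a kink, but its kink is \emph{convex}, hence favorable, and the subsequent convolution with $\eta\in C_c^2$ removes even that.) To salvage your approach you would have to mollify your candidate so that the profile is at least $C^2$ near $\partial B_\delta(z)$; once you do that, you are essentially back to the paper's construction.
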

\begin{proof}
Let us fix $\theta\in(0,1)$ and an arbitrary point $z$ in $\mathrm{supp}(a^+)$. Up to immaterial translations, we may assume (without loss of generality) that $z=0$. 

\smallskip

Since $a\in C(\R^N)$, we may find some $R>0$ such that $(1-\theta/4)\hspace{0.1em}a^+(0)\hspace{0.1em}\mathds{1}_{B_{2R}}\leq a^+$.
Now, let $\eta\in C_c^2(\R^N)$ be such that $(1-\theta)\hspace{0.1em}a^+(0)\hspace{0.1em}\mathds{1}_{B_{R/2}}\leq \eta\leq (1-\theta/2)\hspace{0.1em}a^+(0)\hspace{0.1em}\mathds{1}_{B_{R}}$ and that $\mathrm{supp}(\eta)=B_R$. 
Furthermore, let $\psi\in C^2([0,\infty))$ be given by
$$ \psi(r):=\max\left\{(1-r)^3,0\right\},\text{ for }r\geq0, $$
and let $\Psi\in C^2(\R^N\setminus\{0\})$ be the function given by $\Psi(x):=\psi(|x|)$. Since $\psi'(r)=-3(1-r)^2\mathds{1}_{[0,1]}(r)$ and $\psi''(r)=6(1-r)\mathds{1}_{[0,1]}(r)$, by computing the Laplacian of $\Psi$ we obtain that
$$ \Delta\Psi(x)=\psi''(|x|)+\frac{N-1}{|x|}\hspace{0.1em}\psi'(|x|)=3(1-|x|)\left(2-\frac{N-1}{|x|}(1-|x|)\right)\mathds{1}_{(0,1]}(|x|)\geq0, $$
for all $x\neq0$ with $|x|\geq\frac{N-1}{N+1}$.
In particular, $\Psi$ is subharmonic in $\R^N\setminus \overline{B_{(N-1)/(N+1)}}$. Let
$$ \varphi(x):=C_\varkappa\hspace{0.1em}\min\left\{\Psi(x),\psi(\varkappa)\right\}\text{ and }\varkappa:=\max\left\{\frac{1}{2},\frac{N-1}{N+1}\right\}\in(0,1), $$
where $C_\varkappa>0$ is a constant such that $\varphi$ has unit mass.
Clearly, $\varphi$ is a radial nonnegative continuous piecewise $C^2$ function with unit mass, supported in $B_1$ and subharmonic in $\R^N\setminus \overline{B_{\varkappa}}$.
Finally, for all $x\in\R^N$, we set
$$ \underline{u}(x):=\eta\ast\varphi_{R/2}(x), $$
where $\varphi_{R/2}(x)=(R/2)^{-N}\varphi\left(2x/R\right)$ for all $x\in\R^N$.
Observe immediately that $\underline{u}\in C_c^2(\R^N)$ is supported in $B_{3R/2}$.
Moreover, since $x-y\in\R^N\setminus \overline{B_{R\varkappa/2}}$ for any $(x,y)\in\R^N\setminus \overline{B_{R(1+\varkappa/2)}}\times B_R$, since $\varphi_{R/2}$ is $C^2$ and subharmonic in $\R^N\setminus \overline{B_{R\varkappa/2}}$ and since $\eta\geq0$, we have
$$ \Delta\hspace{0.05em}\underline{u}(x)=\int_{B_R}\eta(y)\hspace{0.1em}\Delta\varphi_{R/2}(x-y)\hspace{0.1em}\mathrm{d}y\geq0\text{ for all }x\in\R^N\setminus\overline{B_{R(1+\varkappa/2)}}. $$
Hence, $\underline{u}$ is subharmonic in $\R^N\setminus\overline{B_{R(1+\varkappa/2)}}$. Moreover, by construction of $\eta$, we have
$$ \underline{u}(0)=\int_{\R^N}\varphi_{R/2}(y)\hspace{0.05em}\eta(-y)\hspace{0.1em}\mathrm{d}y\geq \hspace{0.1em}(1-\theta)\hspace{0.1em}a^+(0)\int_{B_{R/2}}\varphi_{R/2}(y)\hspace{0.1em}\mathrm{d}y=(1-\theta)\hspace{0.1em}a^+(0). $$
Similarly, we have
$$ \underline{u}(x)\leq \left(1-\frac{\theta}{2}\right)a^+(0)\hspace{0.1em}\mathds{1}_{B_{3R/2}}(x)<a^+(x) \text{ for all }x\in B_{3R/2}. $$

\smallskip

To complete the proof we only need to show that the function $\underline{u}$ is a sub-solution to \eqref{KPP} provided $\eps>0$ is small enough. For it, we first observe that, since $\eta\leq (1-\theta/2)\hspace{0.1em}a^+(0)\hspace{0.1em}\mathds{1}_{B_R}$ in $\R^N$ (by construction of $\eta$), we have
\begin{align*}
\underline{u}(x)\left(a(x)-\underline{u}(x)\right)\geq\underline{u}(x)\left(a(x)-\left(1-\frac{\theta}{2}\right)a^+(0)\int_{\R^N}\varphi(y)\hspace{0.1em}\mathds{1}_{B_R}(x-R y/2)\hspace{0.1em}\mathrm{d}y\right).
\end{align*}
Since $\varphi$ has unit mass, since $\underline{u}$ is supported in $B_{3R/2}$ and since $(1-\theta/4)\hspace{0.1em}a^+(0)\hspace{0.1em}\mathds{1}_{B_{2R}}\leq a^+$ (by construction of $R$), we get
\begin{align}
\underline{u}(x)\left(a(x)-\underline{u}(x)\right)&\geq \underline{u}(x)\left(a^+(x)-\left(1-\frac{\theta}{2}\right)a^+(0)\right) \nonumber \\
&\geq\underline{u}(x)\left(\left(1-\frac{\theta}{4}\right)a^+(0)-\left(1-\frac{\theta}{2}\right)a^+(0)\right)=\frac{\theta}{4}\,a^+(0)\,\underline{u}(x). \label{KKl0}
\end{align}
Furthermore, letting $R_\eps:=(1-\varkappa)R/(8\eps)$, we have $0<R_\eps\hspace{0.05em}\eps <(1-\varkappa)R/4$ for any $\eps>0$ (by construction of $R_\eps$). It follows that $B_{R_\eps \eps}(x)\subset \R^N\setminus\overline{B_{R(1+\varkappa/2)}}$ for any $x\in \R^N\setminus B_{R(1+(\varkappa+1)/4)}\,(\subset \R^N\setminus\overline{B_{R(1+\varkappa/2)}})$. But, since $\Delta\hspace{0.05em}\underline{u}\geq0$ in $\R^N\setminus\overline{B_{R(1+\varkappa/2)}}$, we may apply the mean value inequality for subharmonic functions in the ball $B_{R_\eps \eps}(x)$ (see e.g. \cite[Theorem 2.1, p.14]{Gilbarg}) and we get
$$ \dashint_{\mathbb{S}^{N-1}}\underline{u}(x+r\hspace{0.05em}e)\hspace{0.1em}\mathrm{d}\mathcal{H}^{N-1}(e)=\dashint_{\partial B_r(x)}\underline{u}(e)\hspace{0.1em}\mathrm{d}\mathcal{H}^{N-1}(e)\geq \underline{u}(x), $$
for all $x\in \R^N\setminus B_{R(1+(\varkappa+1)/4)}$ and all $0<r\leq R_\eps\hspace{0.05em}\eps$.
As a consequence, there holds
\begin{align}
\int_{B_{R_\eps \eps}(x)}J_\eps(x-y)\hspace{0.1em}\underline{u}(y)\hspace{0.1em}\mathrm{d}y&=\int_0^{R_\eps}J_0(t)\hspace{0.1em}t^{N-1}\left(\int_{\mathbb{S}^{N-1}}\underline{u}(x+\eps t\hspace{0.1em}e)\hspace{0.1em}\mathrm{d}\mathcal{H}^{N-1}(e)\right)\mathrm{d}t \nonumber \\
&\geq \sigma_N\hspace{0.1em}\underline{u}(x)\int_0^{R_\eps}J_0(t)\hspace{0.1em}t^{N-1}\mathrm{d}t=\underline{u}(x)\int_{B_{R_\eps \eps}(x)}J_\eps(x-y)\hspace{0.1em}\mathrm{d}y, \label{KKl01}
\end{align}
for all $x\in \R^N\setminus B_{R(1+(\varkappa+1)/4)}$ and all $\eps>0$, where $J_0\in L_{\mathrm{loc}}^1(0,\infty)$ is a function such that $J(x)=J_0(|x|)$ for a.e. $x\in\R^N$.
On the other hand, for any $x\in\R^N$, we have
\begin{align*}
\int_{\R^N\setminus B_{R_\eps\eps}(x)}J_\eps(x-y)\hspace{0.1em}\mathrm{d}y&=\int_{\R^N\setminus B_{R_\eps}}J(y)\hspace{0.1em}\mathrm{d}y\leq \frac{8^m\eps^m}{(1-\varkappa)^mR^m}\int_{\R^N\setminus B_{R_\eps}}J(y)|y|^m\hspace{0.05em}\mathrm{d}y,
\end{align*}
as follows from a direct application of Markov's inequality.
Since the $m$-th order moment of $J$ is finite and since $R_\eps\to\infty$ as $\eps\to0^+$, there is then some $\eps_1>0$ such that
\begin{align*}
\frac{8^m}{(1-\varkappa)^mR^m}\int_{\R^N\setminus B_{R_\eps}}J(y)|y|^m\hspace{0.05em}\mathrm{d}y\leq\frac{\theta}{8}\,a^+(0),
\end{align*}
for all $0<\eps<\eps_1$.
Consequently, for any $x\in\R^N$ and any $0<\eps<\eps_1$, we get
\begin{align}
\int_{\R^N\setminus B_{R_\eps \eps}(x)}J_\eps(x-y)\hspace{0.1em}\underline{u}(y)\hspace{0.1em}\mathrm{d}y-\underline{u}(x)\int_{\R^N\setminus B_{R_\eps \eps}(x)}J_\eps(x-y)\hspace{0.1em}\mathrm{d}y\geq-\eps^m\hspace{0.1em}\frac{\theta}{8}\,a^+(0)\,\underline{u}(x).  \label{KKKK2}
\end{align}
Hence, collecting \eqref{KKl0}, \eqref{KKl01} and \eqref{KKKK2}, we obtain
\begin{align}
\frac{1}{\eps^m}(J_\eps\ast\underline{u}(x)-\underline{u}(x))+\underline{u}(x)(a(x)-\underline{u}(x))\geq \frac{\theta}{8}\,a^+(0)\,\underline{u}(x)\geq0, \label{KKl3}
\end{align}
for any $x\in\R^N\setminus B_{R(1+(\varkappa+1)/4)}$ and any $0<\eps<\eps_{1}$.

\smallskip

Let us now estimate $\eps^{-m}(J_\eps\ast\underline{u}-\underline{u})$ in $B_{R(1+(\varkappa+1)/4)}$.
To this end, we observe that
\begin{align} \frac{1}{\eps^m}(J_\eps\ast\underline{u}(x)-\underline{u}(x))=\frac{M_{m}(J)}{2}\int_{\R^N}\rho_\eps(y)\hspace{0.1em}\frac{\Delta_y^{2}\hspace{0.1em}\underline{u}(x)}{|y|^{m}}\hspace{0.1em}\mathrm{d}y, \label{KKl1m}
\end{align}
where $\Delta_y^{2}\hspace{0.1em}\underline{u}(x):=\underline{u}(x+y)-2\hspace{0.1em}\underline{u}(x)+\underline{u}(x-y)$ and $\rho_\eps$ is given by
\begin{align}
\rho_\eps(y):=\frac{1}{\eps^N}\hspace{0.1em}\rho\left(\frac{y}{\eps}\right) \text{ where } \rho(y):=\frac{J(y)|y|^{m}}{M_{m}(J)}. \label{mollifier}
\end{align}
Using \cite[Proposition 6.1]{JB} and recalling that $\underline{u}\in C_c^2(\R^N)$ and $0\leq m<2$, we have
\begin{align*} \limsup_{\eps\to0^+}\int_{\R^N}\rho_\eps(y)\hspace{0.1em}\frac{\|\Delta_y^{2}\hspace{0.1em}\underline{u}\|_{L^\infty(\R^N)}}{|y|^{m}}\hspace{0.1em}\mathrm{d}y \leq \limsup_{|h|\to0}\frac{\|\Delta_h^2\hspace{0.1em}\underline{u}\|_{L^\infty(\R^N)}}{|h|^m}\leq\lim_{|h|\to0}\|D^2\underline{u}\|_{L^\infty(\R^N)}|h|^{2-m}=0. 
\end{align*}
But since $\inf_{B_{R(1+(\varkappa+1)/4)}}\underline{u}>0$, there is then some $\eps_2>0$ (independent of $x$) such that
$$ \frac{M_{m}(J)}{2}\int_{\R^N}\rho_\eps(y)\hspace{0.1em}\frac{|\Delta_y^{2}\hspace{0.1em}\underline{u}(x)|}{|y|^{m}}\hspace{0.1em}\mathrm{d}y\leq\frac{\theta}{8}\,a^+(0)\inf_{B_{R(1+(\varkappa+1)/4)}}\hspace{0.1em}\underline{u}, $$
for all $x\in B_{R(1+(\varkappa+1)/4)}$ and all $0<\eps<\eps_2$.
Therefore, recalling \eqref{KKl0} and \eqref{KKl1m}, we get
\begin{align}
\frac{1}{\eps^m}(J_\eps\ast\underline{u}-\underline{u})+\underline{u}(a-\underline{u})\geq \frac{\theta}{8}\,a^+(0)\inf_{B_{R(1+(\varkappa+1)/4)}}\hspace{0.1em}\underline{u}\geq0, \label{KKl4}
\end{align}
in $B_{R(1+(\varkappa+1)/4)}$ for all $0<\eps<\eps_{2}$.
By \eqref{KKl3} and \eqref{KKl4}, we obtain that $\underline{u}$ is a continuous, nonnegative sub-solution to \eqref{KPP} for all $0<\eps<\eps_{0}=\min\{\eps_1,\eps_2\}$.
\end{proof}

Let us now construct a global super-solution to \eqref{KPP}.

\begin{lemma}\label{LE:SUPER}
Let $0\leq m\leq 2$ and let $\beta>0$. Assume \eqref{HYP:A1} and that $J\in L^1(\R^N)$ is a nonnegative radial kernel with unit mass. Suppose that $J$ has a finite $\beta$-th order moment. Then, there exist a constant $C_{\eps,\beta}>0$ and a positive function $\overline{u}_{\eps,\beta}\in C_0(\R^N)$ such that
\begin{align}
a^+(x)\leq\overline{u}_{\eps,\beta}(x)\leq \frac{C_{\eps,\beta}}{1+|x|^\beta}\text{ for all }x\in\R^N, \label{estimate:supersol}
\end{align}
and $\overline{u}_{\eps,\beta}$ is a super-solution to \eqref{KPP} for all $\eps>0$.
\end{lemma}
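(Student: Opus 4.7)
The plan is to modify a constant super-solution by a decaying envelope. Observe first that any constant $M\geq\|a\|_\infty$ is trivially a super-solution to \eqref{KPP} (since $J_\eps\ast M=M$ and $M(a-M)\leq0$), though such a constant does not decay. By \eqref{HYP:A1}, choose $R,\delta>0$ with $\mathrm{supp}(a^+)\subset B_R$ and $a\leq-\delta$ outside $B_R$, and fix $M>\|a^+\|_\infty+1$. For a large radius $R_*=R_*(\eps)$ to be tuned, I would set $\overline{u}_{\eps,\beta}(x):=M\,\zeta(x/R_*)$, where $\zeta\in C^\infty(\R^N)$ is nonincreasing, radial, equal to $1$ on $B_1$, and equal to $|y|^{-\beta}$ for $|y|\geq 2$ (smoothly interpolated). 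This ensures $\overline{u}_{\eps,\beta}\equiv M\geq a^+$ on $B_{R_*}\supset\mathrm{supp}(a^+)$ and $\overline{u}_{\eps,\beta}(x)\leq M(2R_*/|x|)^\beta$ at infinity, giving the desired upper bound with $C_{\eps,\beta}\sim M R_*^\beta$.

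Next I would verify the super-solution inequality in three zones. On $B_{R_*}$, $\overline{u}\equiv M$ and $J_\eps\ast\overline{u}\leq M$, so the convolution contribution is nonpositive; the reactive term $\overline{u}(a-\overline{u})\leq -M\min(1,\delta)$ then provides the required negativity. In the transition zone $R_*\leq|x|\leq 2R_*$, I would bound $|(J_\eps\ast\overline{u}-\overline{u})(x)|\lesssim M(\eps/R_*)^\gamma$ via the $C^\gamma$-regularity of $\zeta$ at scale $R_*$ together with the $\gamma$-th moment of $J$ for $\gamma=\min(1,\beta)$ (finite by interpolation between the $L^1$ norm and the $\beta$-th moment), absorbed by $-\delta\overline{u}$ once $R_*\gg\eps$. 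In the far field $|x|\geq 2R_*$, splitting the convolution integral at $|y|=|x|/2$ and applying Markov's inequality $\int_{|y|\geq T}J_\eps\leq\eps^\beta M_\beta(J)/T^\beta$ yields $(J_\eps\ast\overline{u})(x)\leq CM(\eps^\beta+R_*^\beta)/|x|^\beta$; comparing with $\overline{u}(x)\gtrsim MR_*^\beta/|x|^\beta$ and the reactive $-\delta\overline{u}$ closes the inequality provided $R_*$ is chosen large enough relative to $\eps$.

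The main obstacle is the far-field estimate: the tail of $J_\eps$ transports the plateau value $M$ out to arbitrarily large $|x|$, producing a convolution error of order $|x|^{-\beta}$ that matches the profile decay rate itself. The finite $\beta$-th moment hypothesis provides the decisive $\eps^\beta/T^\beta$ gain via Markov, but the quantitative balance then forces $R_*$ (and hence $C_{\eps,\beta}$) to grow polynomially in $\eps^{-1}$ as $\eps\to 0^+$, with the exponent determined by the interplay between $m$ and $\beta$. The transition-zone estimate is comparatively routine once $\zeta$ has been chosen smooth enough to exploit the $\gamma$-th moment bound.
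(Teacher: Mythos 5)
Your overall architecture (a plateau near the origin glued to a radially decaying $|x|^{-\beta}$ tail, verified in zones, with Markov's inequality controlling the tail of $J_\eps$) is the same as the paper's, which also takes $\overline{u}$ equal to $\|a^+\|_\infty$ on $B_R$ and proportional to $(1+\tau|x|^\beta)^{-1}$ outside, and splits the convolution at $|y|=|x|/2$. The plateau and transition--zone steps in your sketch are fine. The far--field step, however, has a genuine gap.

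In the regime $|x|\geq 2R_*$ you bound $(J_\eps\ast\overline{u})(x)\leq CM(\eps^\beta+R_*^\beta)/|x|^\beta$ with some absolute $C>1$ (the $|y|<|x|/2$ piece alone already contributes $2^\beta M R_*^\beta/|x|^\beta$), and then compare against $\overline{u}(x)\approx MR_*^\beta/|x|^\beta$. This gives
\begin{align*}
\frac{1}{\eps^m}\bigl(J_\eps\ast\overline{u}-\overline{u}\bigr)(x)\;\lesssim\;\frac{(C-1)}{\eps^m}\,\overline{u}(x)+\frac{M\,(2\eps)^\beta M_\beta(J)}{\eps^m|x|^\beta},
\end{align*}
and the first term must be beaten by $\delta\,\overline{u}(x)$ coming from the reaction. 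That requires $(C-1)\leq\delta\,\eps^m$, which fails for small $\eps$ no matter how large $R_*$ is, because $R_*^\beta$ cancels from both sides. Enlarging $R_*$ therefore does \emph{not} close the inequality: the issue is not the size of the constant $C_{\eps,\beta}$ but the fact that a crude bound on $J_\eps\ast\overline{u}$ loses an $O(1)$ multiplicative factor relative to $\overline{u}$ that cannot be absorbed.

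What is actually needed is an estimate on the \emph{difference} $J_\eps\ast\overline{u}-\overline{u}$ that exploits cancellation over the range $|y|<|x|/2$. One way, consistent with your transition-zone idea, is to symmetrize and use the second difference of $|z|^{-\beta}$: for $|h|<|z|/2$ one has $\bigl||z+h|^{-\beta}+|z-h|^{-\beta}-2|z|^{-\beta}\bigr|\lesssim |h|^2|z|^{-\beta-2}$, and interpolating $|y|^2\leq(|x|/2)^{2-\beta}|y|^\beta$ when $\beta<2$ converts this into a term of size $\overline{u}(x)\cdot(\eps/|x|)^\beta M_\beta(J)$, which does tend to zero relative to $\overline{u}(x)$ as $R_*\to\infty$. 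The paper takes a different route to the same cancellation: it writes the ratio $\bigl(\overline{u}(x+y)-\overline{u}(x)\bigr)/\overline{u}(x)$ explicitly as $\tau\bigl(|x|^\beta-|x+y|^\beta\bigr)/\bigl(1+\tau|x+y|^\beta\bigr)$ and bounds the $|y|<|x|/2$ part by $2^\beta R^{-1/q}\tau^{-1}(1+|y|)^\beta$ with $q=(\lfloor\beta\rfloor+1)/\beta$ via the binomial formula and subadditivity of $t\mapsto t^{1/q}$, so that the entire nonlocal contribution is $\overline{u}(x)\bigl(C_1\tau+C_2R^{-1/q}\bigr)$ and can be pushed below $\ell$ by taking $\tau$ small and $R$ large. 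Either route works; as written, your far-field comparison does not, and this is the step that must be repaired.
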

\begin{proof}
The proof of Lemma~\ref{LE:SUPER} follows the same line of ideas as in \cite{BCV}. We begin by introducing some notations. First, we denote by $\mathcal{M}_{\eps,m}$ the operator given by
$$ \mathcal{M}_{\eps,m}[\varphi](x):=\frac{1}{\eps^m}\int_{\R^N}J_\eps(x-y)(\varphi(y)-\varphi(x))\hspace{0.1em}\mathrm{d}y. $$
Second, we let $R(a^+)\geq 1$ be such that $\mathrm{supp}(a^+)\subset B_{R(a^+)}$. Next, we let $R\geq R(a^+)$ and $\ell>0$ be such that $a(x)\leq -\ell \text{ for all }|x|\geq R$ (these numbers are guaranteed to exist by assumption \eqref{HYP:A1}).
Now, given $0<\tau<1$, we let $C_{\tau,R}:=(1/\tau+R^\beta)\|a^+\|_\infty$ and 
$$ \overline{u}(x):=\left\{\begin{array}{cl} \displaystyle\frac{C_{\tau,R}\hspace{0.1em}\tau}{1+\tau\hspace{0.025em}R^\beta} & \text{if }x\in B_{R}, \vspace{3pt}\\ \displaystyle\frac{C_{\tau,R}\hspace{0.1em}\tau}{1+\tau\hspace{0.025em}|x|^\beta} & \text{if }x\in \R^N\setminus B_{R}. \end{array}\right. $$

Our goal will be to prove that there exist $R_{\eps,\beta}>0$ and $\tau_{\eps,\beta}>0$ such that $\overline{u}$ is a super-solution to \eqref{KPP} for all $R\geq \max\{R_{\eps,\beta},R(a^+)\}$, all $0<\tau<\tau_{\eps,\beta}$ and all $\eps>0$.
Readily, we observe that, by construction of $C_{\tau,R}$ and since $R\geq R(a^+)$, we have
\begin{align}
\mathcal{M}_{\eps,m}[\overline{u}]+\overline{u}(a-\overline{u})\leq 0\text{ in }B_{R}. \label{interieur}
\end{align}
To complete the proof it suffices to prove that this still holds on $\R^N\setminus B_{R}$. To this end, we introduce the auxiliary function $U(x):=C_{\tau,R}\hspace{0.1em}\tau(1+\tau|x|^\beta)^{-1}$, $x\in\R^N$, and we remark that
\begin{align*}
\mathcal{M}_{\eps,m}[\overline{u}](x)+\overline{u}(x)(a(x)-\overline{u}(x))\leq \mathcal{M}_{\eps,m}[U](x)-\ell\,U(x),
\end{align*}
for all $|x|\geq R$ (by construction of $\ell$ and $R$). Developing this results in
\begin{align}
\mathcal{M}_{\eps,m}[\overline{u}](x)+\overline{u}(x)(a(x)-\overline{u}(x))&\leq \overline{u}(x)\left(\frac{1}{\eps^m}\int_{\R^N}J_\eps(y)\left(\frac{U(x+y)}{U(x)}-1\right)\mathrm{d}y-\ell\right) \nonumber \\
&=\overline{u}(x)\left(\frac{\tau}{\eps^m}\int_{\R^N}\frac{J_\eps(y)(|x|^\beta-|x+y|^\beta)}{1+\tau|x+y|^\beta}\hspace{0.1em}\mathrm{d}y-\ell\right). \label{II0}
\end{align}
Let us split the integral on the right-hand side as
$$ \int_{\R^N}\frac{J_\eps(y)(|x|^\beta-|x+y|^\beta)}{1+\tau|x+y|^\beta}\hspace{0.1em}\mathrm{d}y=\int_{|y|\geq|x|/2}\cdots\,+\int_{|y|<|x|/2}\cdots\,=:I_1+I_2. $$
Clearly,
\begin{align}
I_1\leq 2^\beta\int_{\R^N}J_\eps(y)|y|^\beta\mathrm{d}y=(2\eps)^\beta M_\beta(J). \label{II1}
\end{align}
Let us now estimate $I_2$. We will estimate the integrand by a quantity that does not depend on $x$. So let $x,y\in\R^N$ be such that $|x|\geq R$ and $|y|<|x|/2$. Let $p:=\lfloor \beta\rfloor+1$ and $q:=p/\beta$, where $\lfloor\cdot\rfloor$ denotes the floor function. Using the binomial formula, we have
\begin{align*}
\tau^q\hspace{0.1em}\frac{|x|^p-|x+y|^p}{1+\tau^q|x+y|^p}&\leq \tau^q\hspace{0.1em}\frac{|x|^p-(|x|-|y|)^p}{1+\tau^q|x+y|^p}= \tau^q\,\sum_{k=1}^p\binom{p}{k}\frac{(-1)^{k+1}|x|^{p-k}|y|^k}{1+\tau^q|x+y|^p}.
\end{align*}
Since $|x|\geq R\geq1$ and $(1+\tau^q\hspace{0.1em}|x+y|^p)^{-1}\leq 2^p(2^p+\tau^q\hspace{0.1em}|x|^p)^{-1}$, we further get
\begin{align*}
\tau^q\hspace{0.1em}\frac{|x|^p-|x+y|^p}{1+\tau^q|x+y|^p}&\leq \frac{2^p}{R}\sum_{k=1}^p\binom{p}{k}\hspace{0.1em}|y|^k\hspace{0.1em}\frac{\tau^q|x|^{p}}{2^p+\tau^q|x|^p} \leq \frac{2^p}{R}\sum_{k=0}^p\binom{p}{k}\hspace{0.1em}|y|^k.
\end{align*}
Using the binomial formula once again, we arrive at
$$ \frac{2^p}{R}(1+|y|)^p\geq \tau^q\hspace{0.1em}\frac{|x|^p-|x+y|^p}{1+\tau^q|x+y|^p}\text{ for all }|x|\geq R\text{ and all }|y|<\frac{|x|}{2}. $$
Moreover, since the function $t\in[0,\infty)\mapsto t^{1/q}$ is concave it is in particular subadditive. Consequently, we have that $(1+\tau^q|x+y|^p)^{1/q}\leq 1+\tau|x+y|^{p/q}$ and that $(|x|^p-|x+y|^p)^{1/q}\geq |x|^{p/q}-|x+y|^{p/q}$ whenever $y\in \overline{B_{|x|}(-x)}$. Using this we deduce that
\begin{align}
{2^\beta}R^{-\frac{1}{q}}(1+|y|)^\beta={2^\frac{p}{q}}{R^{-\frac{1}{q}}}(1+|y|)^\frac{p}{q}\geq \tau\hspace{0.1em}\frac{|x|^\frac{p}{q}-|x+y|^\frac{p}{q}}{1+\tau|x+y|^\frac{p}{q}}=\tau\hspace{0.1em}\frac{|x|^\beta-|x+y|^\beta}{1+\tau|x+y|^\beta}, \label{ineq}
\end{align}
for all $|x|\geq R$ and all $y\in \overline{B_{|x|}(-x)}$ with $|y|<|x|/2$. Notice that this remains true if $y\in\R^N\setminus \overline{B_{|x|}(-x)}$ since the right-hand side in \eqref{ineq} is negative. Hence, \eqref{ineq} holds for all $|x|\geq R$ and all $y\in \R^N$ with $|y|<|x|/2$. As a consequence, we get
\begin{align}
I_2\leq \frac{2^\beta R^{-\frac{1}{q}}}{\tau}\int_{\R^N}J_\eps(y)(1+|y|)^\beta\hspace{0.1em}\mathrm{d}y. \label{II2}
\end{align}
Plugging \eqref{II1} and \eqref{II2} in \eqref{II0}, we find that
$$ \mathcal{M}_{\eps,m}[\overline{u}](x)+\overline{u}(x)(a(x)-\overline{u}(x))\leq \overline{u}(x)\left(C_1\hspace{0.1em}\tau+C_2\hspace{0.1em}R^{-\frac{1}{q}}-\ell\right), $$
for all $|x|\geq R$ and some $C_1,C_2>0$ depending on $\eps$, $m$, $J$ and $\beta$.
Hence, there exist $0<\tau_{\eps,\beta}<1$ and $R_{\eps,\beta}>0$ such that, for all $R\geq R_{\eps,\beta}$ and all $0<\tau<\tau_{\eps,\beta}$, we have
$$ \mathcal{M}_{\eps,m}[\overline{u}](x)+\overline{u}(x)(a(x)-\overline{u}(x))\leq -\frac{1}{2}\hspace{0.1em}\overline{u}(x)\hspace{0.1em}\ell<0 \text{ for all }|x|\geq R. $$
Recalling \eqref{interieur}, we obtain that $\overline{u}$ is indeed a super-solution to \eqref{KPP}. Moreover, estimate \eqref{estimate:supersol} is trivially satisfied. This completes the proof.
\end{proof}

\section{Existence, uniqueness and asymptotic analysis}\label{SE:PROOF}

In this section, we prove our main results Theorems~\ref{THEO}-\ref{THEO02}. Our strategy follows some ideas already used in \cite{BCV,BCHV,BC,Cov07} and relies on the well-known monotone iterative method together with Lemmata~\ref{LE:SUBSOL}-\ref{LE:SUPER}. 
As in \cite{BCV}, we will construct the unique positive minimal solution to \eqref{KPP} as the pointwise limit as $R\to\infty$ of the unique positive solution to
\begin{align}
\mathcal{M}_{R,\eps,m}[u](x)+u(x)(a(x)-u(x))=0\text{ for }x\in B_R, \label{truncated}
\end{align}
where $\mathcal{M}_{R,\eps,m}$ is the operator given by
\begin{align*}
\mathcal{M}_{R,\eps,m}[\varphi](x):=\frac{1}{\eps^m}\left(\int_{B_R}J_\eps(x-y)\hspace{0.1em}\varphi(y)\hspace{0.1em}\mathrm{d}y-\varphi(x)\right). 
\end{align*}
To this end, we need to construct such a solution and to establish its uniqueness. In view of this, we first prove a few comparison principles for the problem \eqref{truncated}. This will be done in the next subsection. Once this is done, we will be in position to prove Theorems~\ref{THEO}-\ref{THEO02} using the sub- and the super-solution constructed at Section~\ref{SE:SUB}.

\subsection{Comparison principles}\label{SE:COMP}

Let us list in this subsection, some comparison principles which will allow us to establish Theorem~\ref{THEO}.

\begin{lemma}[Maximum principle]\label{LE:comparaison}
Let $J\in L^1(\R^N)$ be a nonnegative function. Let $k>0$ and $w\in C(\overline{B_R})$ be such that
\begin{align}
\mathcal{M}_{R,\eps,m}[w]-k\hspace{0.1em}w\geq0\text{ in }B_R. \label{Lemme:prem}
\end{align}
Then, $w\leq 0 \text{ in }B_R$.
\end{lemma}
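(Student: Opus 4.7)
The plan is to argue by contradiction, establishing that $M:=\sup_{\overline{B_R}} w\leq 0$. Assume toward contradiction that $M>0$. Since $w\in C(\overline{B_R})$ and $\overline{B_R}$ is compact, $M$ is attained at some $x_0\in\overline{B_R}$ with $w(x_0)=M$.

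First, I would rewrite \eqref{Lemme:prem} pointwise in the more convenient form
\begin{equation*}
\int_{B_R} J_\eps(x-y)\hspace{0.1em} w(y)\hspace{0.1em}\mathrm{d}y\geq(1+k\eps^m)\hspace{0.1em}w(x),\qquad x\in B_R.
\end{equation*}
Using $w(y)\leq M$ throughout $\overline{B_R}$, the nonnegativity of $J_\eps$, and the unit-mass normalization of $J$ (which is the standing convention, so that $\int_{B_R}J_\eps(x-y)\hspace{0.1em}\mathrm{d}y\leq 1$), the left-hand side is bounded above by $M$. Hence
\begin{equation*}
w(x)\leq\frac{M}{1+k\eps^m}\qquad\text{for all }x\in B_R.
\end{equation*}
By the continuity of $w$ on $\overline{B_R}$, this pointwise bound extends to all of $\overline{B_R}$; evaluating it at $x_0$ yields $M\leq M/(1+k\eps^m)$, which forces $M\leq 0$ since $k\eps^m>0$, contradicting $M>0$.

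The only mild subtlety is the case $x_0\in\partial B_R$, where \eqref{Lemme:prem} need not hold at $x_0$ itself; it is handled precisely by the continuity extension in the last step. I anticipate no real obstacle: the result is just a nonlocal analogue of the weak maximum principle, and the unit-mass condition on $J$ supplies exactly the sub-Markovian structure that makes the elementary ``test-at-the-maximum'' argument go through.
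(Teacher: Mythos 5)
Your proof is correct and follows essentially the same route as the paper's: argue by contradiction from $\sup w>0$, exploit the unit-mass normalization of $J$ so that $\int_{B_R}J_\eps(x-y)\,\mathrm{d}y\leq 1$ bounds the convolution term by $M$, and derive the contradiction from $k\eps^m>0$. The only cosmetic difference is how the (possibly boundary) maximizer is reached --- the paper specializes the inequality at interior points $x_j\to\bar{x}$ and passes to the limit via dominated convergence, whereas you first derive a uniform pointwise bound on $B_R$ and extend it to $\overline{B_R}$ by continuity --- but these are interchangeable treatments of the same technical detail.
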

\begin{proof}
Suppose, by contradiction, that $\sup_{B_R}w>0$. Since $w$ is continuous, there exists a point $\bar{x}\in\overline{B_R}$ such that $w(\bar{x})=\sup_{B_R}\hspace{0.1em}w>0$.
Let $(x_j)_{j\geq0}\subset B_R$ be such that $x_j\to\bar{x}$ as $j\to\infty$.
Since $k>0$, specializing \eqref{Lemme:prem} at $x_j$ and passing to the limit as $j\to\infty$ (using the dominated convergence theorem), we obtain
\begin{align*}
0&\geq \frac{1}{\eps^m}\int_{B_R}J_\eps(\bar{x}-y)\left(w(y)-\sup_{B_R}\hspace{0.1em}w\right)\mathrm{d}y\geq \left(k+\frac{1}{\eps^m}\int_{\R^N\setminus B_R}J_\eps(\bar{x}-y)\hspace{0.1em}\mathrm{d}y\right)\sup_{B_R}\hspace{0.1em}w>0,
\end{align*}
which is a contradiction. The proof is thereby complete.
\end{proof}

\begin{lemma}[Comparison principle]\label{LE:COMPARISON:BR}
Let $J\in L^1(\R^N)$ be a nonnegative function. Let $v\in C(\overline{B_R})$ be a positive function and let $u\in C(\overline{B_R})$ be a nonnegative function. Suppose that $u$ and $v$ are a sub- and a super-solution to \eqref{truncated}, respectively.
Then, $u\leq v$ in $\overline{B_R}$.
\end{lemma}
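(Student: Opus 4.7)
\medskip

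\noindent\textbf{Plan of proof.} The strategy is a standard sliding/scaling argument adapted to the nonlocal KPP setting. Since $v$ is continuous and positive on the compact set $\overline{B_R}$, we have $\inf_{\overline{B_R}} v > 0$, and since $u$ is continuous (hence bounded) on $\overline{B_R}$, the number
\[
\tau^*:=\inf\bigl\{\tau>0\,:\,u\leq \tau v \text{ in } \overline{B_R}\bigr\}
\]
is well-defined and finite. The goal is to show $\tau^*\leq 1$. By continuity on the compact set $\overline{B_R}$, one has $u\leq \tau^* v$ globally, and there exists a touching point $\bar{x}\in\overline{B_R}$ at which $u(\bar{x})=\tau^* v(\bar{x})$.

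Assume for contradiction that $\tau^*>1$. The first step is to notice that the operator $\mathcal{M}_{R,\eps,m}$ is linear and that the sub- and super-solution inequalities extend by continuity from $B_R$ to $\overline{B_R}$ (the convolution $\int_{B_R} J_\eps(x-y)\varphi(y)\,\mathrm{d}y$ is continuous in $x$ whenever $\varphi\in C(\overline{B_R})$, thanks to $J_\eps\in L^1(\R^N)$). Multiplying the super-solution inequality by $\tau^*>0$ and subtracting the sub-solution inequality, I would obtain, at $\bar{x}$,
\[
\mathcal{M}_{R,\eps,m}[\tau^* v-u](\bar{x})+\tau^* v(\bar{x})\bigl(a(\bar{x})-v(\bar{x})\bigr)-u(\bar{x})\bigl(a(\bar{x})-u(\bar{x})\bigr)\leq 0.
\]

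The second step is to estimate each term at the touching point. Since $\tau^* v-u\geq 0$ throughout $\overline{B_R}$ and vanishes at $\bar{x}$, one has
\[
\mathcal{M}_{R,\eps,m}[\tau^* v-u](\bar{x})=\frac{1}{\eps^m}\int_{B_R}J_\eps(\bar{x}-y)\bigl(\tau^* v-u\bigr)(y)\,\mathrm{d}y\geq 0.
\]
For the reaction term, using $u(\bar{x})=\tau^* v(\bar{x})$ the $a$-contribution cancels and one is left with $-\tau^* v(\bar{x})^2+u(\bar{x})^2=\tau^*(\tau^*-1)v(\bar{x})^2$, which is \emph{strictly positive} because $\tau^*>1$ and $v(\bar{x})>0$. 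Adding the two contributions yields a strictly positive quantity bounded above by $0$, which is the desired contradiction. Hence $\tau^*\leq 1$, i.e., $u\leq v$ in $\overline{B_R}$.

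The main technical subtlety is ensuring that the touching point argument is valid even if $\bar{x}\in\partial B_R$ (where the equation is only initially prescribed for $x\in B_R$). This is handled by the continuity extension mentioned above, and the nonlocal operator $\mathcal{M}_{R,\eps,m}$ at a boundary point still samples only values of $\tau^* v-u$ inside $B_R$, which are nonnegative; the integral sign is preserved without needing any boundary condition. The crucial algebraic input is the KPP structure, namely that $s\mapsto s(a-s)$ is such that the quadratic difference $u^2-\tau^* v^2$ has the favorable sign $\tau^*(\tau^*-1)v(\bar{x})^2>0$ whenever $\tau^*>1$; this is precisely where the monotonicity of $s\mapsto (a-s)$ in the reaction term is used.
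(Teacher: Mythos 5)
Your proof is correct and follows essentially the same scaling/touching-point argument as the paper (which attributes it to Coville): you introduce $\tau^*$, assume $\tau^*>1$, and obtain a contradiction at the touching point by combining the sign of the nonlocal term with the strictly positive reaction-term excess $\tau^*(\tau^*-1)v(\bar{x})^2$. The only cosmetic difference is that the paper first records that $\gamma^* v$ is a strict super-solution and then compares, whereas you subtract the two inequalities directly at the touching point; the mathematical content is identical.
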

\begin{proof}
The proof follows from some ideas due to Coville (see \cite[Section 6.3]{Cov10}). Let $u\in C(\overline{B_R})$ be a nonnegative sub-solution and let $v\in C(\overline{B_R})$ be a positive super-solution. Define
$$ \gamma^*:=\inf\left\{\gamma>0\text{ s.t. }\gamma\hspace{0.1em}v\geq u\text{ in }\overline{B_R}\right\}. $$
Suppose, by contradiction, that $\gamma^*>1$. Then, we have
\begin{align}
\mathcal{M}_{R,\eps,m}[\gamma^*v]+\gamma^*v(a-\gamma^*v)\leq\gamma^*v(a-\gamma^*v)-\gamma^*v(a-v)
&=\gamma^*(1-\gamma^*)v^2<0. \label{kontrdg}
\end{align}
Since $\overline{B_R}$ is compact, by minimality of $\gamma^*$ there must be some $x_0\in\overline{B_R}$ such that $\gamma^*v(x_0)=u(x_0)$. Hence, there holds
$$ \mathcal{M}_{R,\eps,m}[\gamma^*v](x_0)+\gamma^*v(x_0)(a(x_0)-\gamma^*v(x_0))\geq \frac{1}{\eps^m}\int_{B_R}J_\eps(x_0-y)\hspace{0.1em}(\gamma^*v(y)-u(y))\hspace{0.1em}\mathrm{d}y. $$
By assumption, the integrand on the right-hand side is nonnegative. This contradicts the strict inequality in \eqref{kontrdg}. Therefore, $\gamma^*\leq1$ which ensures that $u\leq v$ in $\overline{B_R}$.
\end{proof}

\subsection{Proof of Theorem~\ref{THEO}}

We now proceed to the proof of Theorem~\ref{THEO}. For the convenience of the reader, the proof is split into four parts. After a preparatory step, we establish the existence of a unique positive solution to the truncated problem \eqref{truncated} using the sub-solution constructed at Section~\ref{SE:SUB} and the comparison principles of Subsection~\ref{SE:COMP}. Then, we use this solution to prove that \eqref{KPP} admits a unique positive minimal solution. Finally, exploiting the properties of the sub-solution constructed at Section~\ref{SE:SUB} we derive its asymptotic behavior, which will thereby complete the proof of Theorem~\ref{THEO}.
\vskip 0.3cm

\noindent\emph{Step 1. Preliminaries}
\smallskip

Let $R_0>0$ be such that $\overline{\mathrm{supp}(a^+)}\subset B_{R_0}$ and let $R\geq R_0$ be fixed. Observe immediately that if $\psi\in C_c(\R^N)$ is a nonnegative sub-solution to \eqref{KPP} satisfying $\mathrm{supp}(\psi)\subset\mathrm{supp}(a^+)$, then $\psi$ is also a nonnegative sub-solution to the truncated problem \eqref{truncated} for all $R\geq R_0$.

Now, let $z\in\mathrm{supp}(a^+)$ and $\theta\in(0,1)$ be arbitrary. By Lemma~\ref{LE:SUBSOL}, there exist $\eps_{z,\theta}>0$, a neighborhood $V_{z,\theta}\subset\mathrm{supp}(a^+)$ of $z$ and a sub-solution $\underline{u}^{z,\theta}\in C_c(\R^N)$ to \eqref{KPP} satisfying
$$ \mathrm{supp}(\underline{u}^{z,\theta})=V_{z,\theta},\ \underline{u}^{z,\theta}(z)\geq(1-\theta)\hspace{0.1em}a^+(z) \text{ and } \underline{u}^{z,\theta}(x)<a^+(x)\text{ for all }x\in V_{z,\theta}. $$
Let $\overline{u}:=\|a^+\|_\infty$. Then, $\underline{u}^{z,\theta}$ and $\overline{u}$ are continuous global sub- and super-solution to \eqref{KPP} for all $0<\eps<\eps_{z,\theta}$. Since $R\geq R_0$, these functions are also a sub- and a super-solution to the truncated problem \eqref{truncated} for all $0<\eps<\eps_{z,\theta}$.
\vskip 0.3cm

\noindent\emph{Step 2. Construction of a unique solution to the truncated problem}
\smallskip

Let $0<\eps<\min\{\eps_{z,\theta},\eps_{m,a^+}\}$, where $\eps_{m,a^+}>0$ has the same meaning as in Proposition~\ref{LE:REG:POS}.
Let us first observe that, for all $R>0$, the operator $\mathcal{M}_{R,\eps,m}$ is linear and continuous on $\big(C(\overline{B_R}),\left\|\cdot\right\|_\infty\big)$.
Let $f(x,s):=s(a(x)-s)$ and let $k>0$ be a number large enough so that the map $s\mapsto -k\hspace{0.1em}s-f(x,s)$ is decreasing in $\big[0,2\|a^+\|_\infty\big]$ for all $x\in\R^N$ (this is possible since $a\in L^\infty(\R^N)$) and that $k>\|\mathcal{M}_{R,\eps,m}\|$ so that $k\in\rho(\mathcal{M}_{R,\eps,m})$ where $\rho(\mathcal{M}_{R,\eps,m})$ denotes the resolvent of the operator $\mathcal{M}_{R,\eps,m}$.
\smallskip

We will construct a solution $u_{R,\eps}$ to \eqref{truncated} satisfying $\underline{u}^{z,\theta}\leq u_{R,\eps}\leq\overline{u}$ using a monotone iterative scheme starting from $\underline{u}^{z,\theta}$ and using $\overline{u}$ as a barrier function.
\smallskip

Namely, we set $u_{R,\eps}^0\equiv \underline{u}^{z,\theta}$ and, for $j\geq0$, we let
\begin{align}
\mathcal{M}_{R,\eps,m}[u_{R,\eps}^{j+1}](x)-k\hspace{0.1em}u_{R,\eps}^{j+1}(x)=-k\hspace{0.1em}u_{R,\eps}^{j}(x)-f(x,u_{R,\eps}^j(x)) \text{ for }x\in \overline{B_R}. \label{suite:auxiliaire:schema:iter}
\end{align}
Observe that the $u_{R,\eps}^j$'s are well-defined elements of $C(\overline{B_R})$.
We will show that a solution to \eqref{truncated} can be obtained as the pointwise limit of $(u_{R,\eps}^j)_{j\geq0}$. First, when $j=0$, we have
\begin{align}
\mathcal{M}_{R,\eps,m}[u_{R,\eps}^1](x)-k\hspace{0.1em}u_{R,\eps}^1(x)=-k\hspace{0.1em}u_{R,\eps}^0(x)-f(x,u_{R,\eps}^0(x)) \text{ for }x\in \overline{B_R}. \label{iter:u1u0}
\end{align}
We claim that $\underline{u}=u_{R,\eps}^0\leq u_{R,\eps}^1\leq \overline{u}$ in $\overline{B_R}$. Indeed, we have
\be\left\{\begin{array}{rcl}
\mathcal{M}_{R,\eps,m}[u_{R,\eps}^1-u_{R,\eps}^0](x)-k(u_{R,\eps}^1-u_{R,\eps}^0)\! & \!\!=\!\! &\! -\mathcal{M}_{R,\eps,m}[u_{R,\eps}^0](x)-f(x,u_{R,\eps}^0(x)), \vspace{3pt}\\
\mathcal{M}_{R,\eps,m}[u_{R,\eps}^{1}-\overline{u}](x)-k(u_{R,\eps}^{1}-\overline{u})\! &  \!\!\geq\!\! &\! f(x,\overline{u}(x))+k\hspace{0.1em}\overline{u}(x)-f(x,u_{R,\eps}^0(x))-k\hspace{0.1em}u_{R,\eps}^0(x).\end{array}\right.  \nonumber
\ee
Since $u_{R,\eps}^0=\underline{u}^{z,\theta}$ (resp. $\overline{u}$) is a sub-solution (resp. super-solution) to \eqref{truncated}, $\underline{u}^{z,\theta}\leq \overline{u}$ and $s\mapsto-k\hspace{0.1em}s-f(x,s)$ is decreasing for all $x\in \overline{B_R}$, we obtain that
\begin{align}
\left\{
\begin{array}{rl}
\mathcal{M}_{R,\eps,m}[u_{R,\eps}^1-u_{R,\eps}^0](x)-k(u_{R,\eps}^1-u_{R,\eps}^0)\leq 0,& \vspace{3pt} \\
\mathcal{M}_{R,\eps,m}[u_{R,\eps}^{1}-\overline{u}](x)-k(u_{R,\eps}^{1}-\overline{u})\geq 0.&
\end{array}
\right.\label{sys:iter:u1u0}
\end{align}
This, together with Lemma~\ref{LE:comparaison}, then gives that $\underline{u}^{z,\theta}=u_{R,\eps}^0\leq u_{R,\eps}^1\leq \overline{u}$ in $\overline{B_R}$. Similarly, by \eqref{suite:auxiliaire:schema:iter}, the function $u_{R,\eps}^2\in C(\overline{B_R})$ solves \eqref{iter:u1u0} with $u_{R,\eps}^2$ in place of $u_{R,\eps}^1$ and $u_{R,\eps}^1$ in place of $u_{R,\eps}^0$. Thus, from the monotonicity of $s\mapsto-k\hspace{0.05em}s-f(x,s)$, we deduce that \eqref{sys:iter:u1u0} still holds with $u_{R,\eps}^2$ instead of $u_{R,\eps}^1$ and $u_{R,\eps}^1$ instead of $u_{R,\eps}^0$. We may then apply the maximum principle Lemma~\ref{LE:comparaison} and we deduce that $\underline{u}^{z,\theta}=u_{R,\eps}^0\leq u_{R,\eps}^1\leq u_{R,\eps}^2\leq\overline{u}$ in $\overline{B_R}$. By induction, we infer that the $u_{R,\eps}^j$'s satisfy the monotonicity relation
$$ \underline{u}^{z,\theta}=u_{R,\eps}^0\leq u_{R,\eps}^1\leq u_{R,\eps}^2\leq \cdots\leq u_{R,\eps}^j\leq u_{R,\eps}^{j+1}\leq \cdots \leq\overline{u}. $$
Since $(u_{R,\eps}^j)_{j\geq0}$ is non-decreasing and bounded from above by $\underline{u}$, the function
\begin{align}
u_{R,\eps}(x):=\lim_{j\to\infty}u_{R,\eps}^{j}(x)\in\left[\underline{u}^{z,\theta}(x),\overline{u}(x)\right], \label{definition:de:la:sol}
\end{align}
is well-defined for any $x\in \overline{B_R}$. In particular, it follows from \eqref{definition:de:la:sol} that $u_{R,\eps}$ is nonnegative, not identically zero and bounded. It remains only to check that the function $u_{R,\eps}$ is a solution to \eqref{truncated}. For it, it suffices to let $j\to\infty$ in \eqref{suite:auxiliaire:schema:iter} (using the dominated convergence theorem), which then gives
$$ \mathcal{M}_{R,\eps,m}[u_{R,\eps}](x)+f(x,u_{R,\eps}(x))=0 \text{ for any }x\in \overline{B_R}. $$
Finally, by a straightforward adaptation of Proposition~\ref{LE:REG:POS} we deduce that $u_{R,\eps}\in C(\overline{B_R})$ and that $u_{R,\eps}>0$ in $\overline{B_R}$. Moreover, $u_{R,\eps}$ is the only positive solution to \eqref{truncated} (this follows from Lemma~\ref{LE:COMPARISON:BR}).

\vskip 0.3cm

\noindent\emph{Step 3. Construction of the unique positive minimal solution to \eqref{KPP}}
\smallskip

Now, we observe that, if $R'\geq R$, then the unique positive solution to \eqref{truncated} with $R'$ instead of $R$ will be a super-solution to \eqref{truncated} in the ball $\overline{B_R}$. Hence, using Lemma~\ref{LE:COMPARISON:BR}, we find that $\underline{u}^{z,\theta}\leq u_{R,\eps}\leq u_{R',\eps}\leq \|a^+\|_\infty$ for all $R'\geq R\geq R_0$ and all $0<\eps<\min\{\eps_{z,\theta},\eps_{m,a^+}\}$.
Hence, the function $u_\eps$ given by
$$ u_\eps(x):=\lim_{R\to\infty}\hspace{0.1em}u_{R,\eps}(x)\in\left[\underline{u}^{z,\theta}(x),\|a^+\|_\infty\right], $$
is well-defined and it is nonnegative and bounded. By the dominated convergence theorem, we find that $u_\eps$ is a nontrivial solution to \eqref{KPP} and, by Proposition~\ref{LE:REG:POS}, we further obtain that $u_\eps\in C_0(\R^N)$ and that $u_\eps>0$ in $\R^N$ (remember that $\eps<\eps_{m,a^+}$).

Let us now check that $u_\eps$ is the unique positive minimal solution to \eqref{KPP}. Let $v_\eps\in L^\infty(\R^N)$ be any nontrivial nonnegative solution to \eqref{KPP}. Then, $v_\eps\in C_0(\R^N)$ and $v_\eps>0$ in $\R^N$ (by Proposition~\ref{LE:REG:POS}). But since $v_\eps$ is a super-solution to \eqref{truncated} for all $R\geq R_0$, we have $u_{R,\eps}\leq v_\eps$ in $\overline{B_R}$ for all $R\geq R_0$ (by Lemma~\ref{LE:COMPARISON:BR}), which enforces that $u_\eps\leq v_\eps$. Therefore, $u_\eps$ is indeed the unique positive minimal solution to \eqref{KPP}.
\vskip 0.3cm

\noindent\emph{Step 4. Asymptotics of the minimal solution}
\smallskip

Lastly, by construction of $\underline{u}^{z,\theta}$, we have
\begin{align} \liminf_{\eps\to0^+}\hspace{0.1em}u_\eps(z)\geq \underline{u}^{z,\theta}(z)\geq(1-\theta)\hspace{0.1em}a^+(z). \label{eq:asy:z}
\end{align}
But since $z$ has been chosen arbitrarily, choosing another point $z'\in\mathrm{supp}(a^+)$ would result in the existence of some $\eps_{z',\theta}>0$ and of a solution $\tilde{u}_\eps\in C_0(\R^N)$ to \eqref{KPP} for $0<\eps<\min\{\eps_{z',\theta},\eps_{m,a^+}\}$ which satisfies \eqref{eq:asy:z} with $z'$ instead of $z$ and which is the unique positive minimal solution to \eqref{KPP} in the range $0<\eps<\min\{\eps_{z',\theta},\eps_{m,a^+}\}$. Hence, it coincides with $u_\eps$ for $0<\eps<\min\{\eps_{z,\theta},\eps_{z',\theta},\eps_{m,a^+}\}$. Thus, we have
$$ \liminf_{\eps\to0^+}\hspace{0.1em}u_\eps(z')=\liminf_{\eps\to0^+}\hspace{0.1em}\tilde{u}_\eps(z')\geq \underline{u}^{\theta,z'}(z')\geq(1-\theta)\hspace{0.1em}a^+(z'). $$
Therefore, we find that $\liminf_{\eps\to0^+}\hspace{0.1em}u_\eps(x)\geq (1-\theta)\hspace{0.1em}a^+(x)$, for all $\theta\in(0,1)$ and all $x\in\mathrm{supp}(a^+)$ (hence for all $x\in\R^N$). Letting $\theta\to0^+$, we obtain
$$ \liminf_{\eps\to0^+}\hspace{0.1em}u_\eps(x)\geq a^+(x) \text{ for all }x\in\R^N. $$
This thereby completes the proof of Theorem~\ref{THEO}.

\subsection{Proof of Theorem~\ref{THEO02}}

We now complete this section by proving Theorem~\ref{THEO02}.
By assumption, there exists some $\beta>N$ such that $J$ has a finite $\beta$-th order moment. Then, by Lemma~\ref{LE:SUPER}, there exists a super-solution $\overline{u}_{\eps,\beta}\in C_0(\R^N)$ to \eqref{KPP} satisfying
$$ \overline{u}_{\eps,\beta}(x)\leq \frac{C_{\eps,\beta}}{1+|x|^\beta}\text{ for all }x\in\R^N, $$
and some $C_{\eps,\beta}>0$. In particular, $\overline{u}_{\eps,\beta}\in L^1(\R^N)$. Using the same arguments as above (relying on the comparison principle Lemma~\ref{LE:COMPARISON:BR}), we obtain that the unique positive minimal solution $u_\eps$ to \eqref{KPP} satisfies $u_\eps\leq\overline{u}_{\eps,\beta}$ in $\R^N$, which enforces that $u_\eps\in L^1(\R^N)$.
\smallskip

Let us now prove that the minimal solution $u_\eps$ is the only nontrivial nonnegative solution to \eqref{KPP}. Suppose that there is another nontrivial nonnegative solution $v_\eps\in L^\infty(\R^N)$. By Proposition~\ref{LE:REG:POS}, we have $v_\eps\in C_0(\R^N)$ and $v_\eps>0$ in $\R^N$. Since $u_\eps$ and $v_\eps$ are two solutions to \eqref{KPP} and since $u_\eps\in L^1(\R^N)$, we have
$$ \int_{\R^N}\left[u_\eps(x)\hspace{0.1em}\big(\mathcal{M}_{\eps,m}[v_\eps](x)+f(x,v_\eps(x))\big)-v_\eps(x)\hspace{0.1em}\big(\mathcal{M}_{\eps,m}[u_\eps](x)+f(x,u_\eps(x))\big)\right]\mathrm{d}x=0.
$$
In turn, this equality implies that
\begin{align}
\int_{\R^N}u_\eps(x)v_\eps(x)(v_\eps(x)-u_\eps(x))\hspace{0.1em}\mathrm{d}x=0. \label{unicite:sol:glob}
\end{align}
By minimality of $u_\eps$, we must have $u_\eps\leq v_\eps$ in $\R^N$. This, together with \eqref{unicite:sol:glob}, enforces that $u_\eps\equiv v_\eps$. Therefore, \eqref{KPP} indeed admits a unique positive solution.
\smallskip

It remains to derive the asymptotic behavior of $u_\eps$. Suppose that $a^+\in\mathring{B}_{1,\infty}^{m}(\R^N)$. Define $v_\eps:=a^+-u_\eps$ and $w_\eps:=a-u_\eps$. Then,
\begin{align*}
\frac{1}{\eps^m}(v_\eps-J_\eps\ast v_\eps)+u_\eps w_\eps=\frac{1}{\eps^m}(a^+-J_\eps\ast a^+). 
\end{align*}
Multiplying this equation by $v_\eps^+$ and integrating over $\R^N$, we get
\begin{align*}
\int_{\R^N}u_\eps(w_\eps^+)^2=\int_{\R^N}u_\eps\hspace{0.1em} v_\eps^+ w_\eps\leq \frac{1}{\eps^m}\int_{\R^N}v_\eps^+&(a^+-J_\eps\ast a^+)\leq\frac{1}{\eps^m}\int_{\R^N}a^+|a^+-J_\eps\ast a^+|, 
\end{align*}
where we have used \cite[Formula (5.7)]{BCV} and the fact that
\begin{align}
0\leq w_\eps^+= v_\eps^+\leq a^+\in L^1(\R^N). \label{SUPP:We}
\end{align}
It follows that
\begin{align*}
\int_{\R^N}u_\eps(w_\eps^+)^2&\leq \|a^+\|_\infty\hspace{0.1em}\frac{M_m(J)}{2}\int_{\R^N}\int_{\R^N}\rho_\eps(y)\hspace{0.1em}\frac{|\Delta_y^2a^+(x)|}{|y|^m}\hspace{0.1em}\mathrm{d}x\hspace{0.05em}\mathrm{d}y,
\end{align*}
where $\rho_\eps$ is as in \eqref{mollifier} and $\Delta_y^2a^+(x)=a^+(x+y)-2\hspace{0.05em}a^+(x)+a^+(x-y)$.
Since $a^+\in \mathring{B}_{1,\infty}^{m}(\mathbb{R}^N)$, using \cite[Proposition 6.1]{JB}, we get
$$ \limsup_{\eps\to0^+}\int_{\R^N}u_\eps(w_\eps^+)^2\leq \|a^+\|_\infty\hspace{0.1em}\frac{M_m(J)}{2}\hspace{0.1em}\lim_{|h|\to0}\hspace{0.1em}\frac{\|\Delta_h^2a^+\|_{L^1(\R^N)}}{|h|^m}=0. $$
And so we have
\begin{align}
\lim_{\eps\to0^+}\int_{\R^N}u_\eps(w_\eps^+)^2=0. \label{limitzero}
\end{align}
On the other hand, since $u_\eps\in L^1(\R^N)$, we can integrate \eqref{KPP} and we get
$$ \int_{\R^N}u_\eps(a-u_\eps)^+=\int_{\R^N}u_\eps(a-u_\eps)^-. $$
Thus, recalling \eqref{SUPP:We}, we have
\begin{align*}
\int_{\R^N}u_\eps(a-u_\eps)^\pm&=\int_{\mathcal{A}}u_\eps(a-u_\eps)^+\leq \|u_\eps\|_{L^1(\mathcal{A})}^{1/2}\left(\int_{\R^N}u_\eps[(a-u_\eps)^+]^2\right)^{1/2},
\end{align*}
where $\mathcal{A}:=\mathrm{supp}(a^+)$. Since $u_\eps\leq\|a^+\|_\infty$, it follows from \eqref{limitzero} that $\|u_\eps(a-u_\eps)\|_{L^1(\R^N)}$ vanishes as $\eps\to0^+$.
Therefore, up to extraction of a subsequence, we have
$$ \lim_{\eps\to0^+}\hspace{0.1em}u_\eps(x)(a(x)-u_\eps(x))=0\text{ for all }x\in\R^N\setminus\mathscr{N}, $$
where $\mathscr{N}\subset\R^N$ is a null set.
But since $u_\eps>0$, we then have
\begin{align*}
0\leq \lim_{\eps\to0^+}\hspace{0.1em}u_\eps^2(x)\leq \lim_{\eps\to0^+}\hspace{0.1em}|u_\eps(x)(a(x)-u_\eps(x))|=0 \text{ for all }x\in \R^N\setminus(\mathrm{supp}(a^+)\cup\mathscr{N}).
\end{align*}
Therefore, $u_\eps\to0$ in $\R^N\setminus(\mathrm{supp}(a^+)\cup\mathscr{N})$. By Lemma~\ref{LE:SUBSOL}, we have
$$ \forall\,z\in\mathrm{supp}(a^+),\ \exists\,V_z\in\mathscr{V}_z(\mathrm{supp}(a^+)),\ \exists\,\tau_z,\eps_z>0\ :\ \forall\, x\in V_z,\ \forall\,\eps\in(0,\eps_z),\ \tau_z\leq u_\eps(x), $$
where $\mathscr{V}_z(\mathrm{supp}(a^+))$ is the set of all neighborhoods of $z$ contained in $\mathrm{supp}(a^+)$.
Hence,
\begin{align*}
0\leq\tau_z\hspace{0.1em}\lim_{\eps\to0^+}\hspace{0.1em}|a^+(x)-u_\eps(x)|&\leq\lim_{\eps\to0^+}\hspace{0.1em}|u_\eps(x)(a(x)-u_\eps(x))|=0 \text{ for all }x\in V_z\setminus\mathscr{N}.
\end{align*}
This being true for any $z\in\mathrm{supp}(a^+)$, it follows that $u_\eps\to a^+$ a.e. in $\R^N$, as desired.


\appendix

\section{}

In this Appendix, we state a last result which suggests that the moment condition in Theorem~\ref{THEO} (and in Lemma~\ref{LE:SUBSOL}) is optimal. In substance, we prove that, if the $\beta$-th order moment of $J$ is infinite for some $\beta<m$, then there cannot exist uniform (with respect to $\eps$) sub-solutions $\underline{u}$ to \eqref{KPP} with $\underline{u}\leq a^+$, i.e. that Lemma~\ref{LE:SUBSOL} cannot be significantly improved.
\begin{prop}\label{MOMENT}
Let $0<m<2$. Assume \eqref{HYP:A1} and that $J\in L^1(\R^N)$ is a radially symmetric kernel with unit mass. Suppose that there exists $\eps_0>0$ and $\underline{u}\in C_c(\R^N)$ such that $\underline{u}$ is a nonnegative, nontrivial sub-solution to \eqref{KPP} for all $0<\eps<\eps_0$. Then,
$$ \limsup_{\beta\to m^-}\,\,(m-\beta)\int_{\R^N}J(x)|x|^{\beta}\mathrm{d}x<\infty. $$
In particular, $J$ has a finite $\beta$-th order moment for all $0<\beta<m$.
\end{prop}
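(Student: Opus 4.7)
My strategy is to extract from the existence of a uniform compactly supported subsolution a polynomial decay bound on the tail function $F(T):=\int_{|z|>T}J(z)\,\mathrm{d}z$, and then read off the moment statement via the layer-cake formula. Since $\underline{u}\in C_c(\R^N)$, fix $R_0>0$ with $\mathrm{supp}(\underline{u})\subset B_{R_0/2}$. Multiplying the subsolution inequality by $\mathds{1}_{B_{R_0}}$, integrating over $\R^N$, and using Fubini together with the radial symmetry of $J$ yields
\[ \frac{1}{\eps^m}\int_{\R^N}\underline{u}(y)\bigl[\mathds{1}_{B_{R_0}}(y)-(J_\eps\ast\mathds{1}_{B_{R_0}})(y)\bigr]\,\mathrm{d}y\ \leq\ \int_{\R^N}\underline{u}(a-\underline{u})\ \leq\ \|a^+\|_\infty\|\underline{u}\|_{L^1(\R^N)}, \]
the last bound coming from discarding $-\int\underline{u}^2\leq0$ and using $a\leq a^+$.

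Next, for $y\in\mathrm{supp}(\underline{u})\subset B_{R_0/2}$, I rewrite
\[ 1-(J_\eps\ast\mathds{1}_{B_{R_0}})(y)=\int_{|y-z|>R_0}J_\eps(z)\,\mathrm{d}z=\int_{|y-\eps z'|>R_0}J(z')\,\mathrm{d}z', \]
and the triangle inequality $|y-\eps z'|\geq\eps|z'|-|y|$ shows that $\{|z'|>3R_0/(2\eps)\}$ is contained in the integration domain, giving $1-(J_\eps\ast\mathds{1}_{B_{R_0}})(y)\geq F(3R_0/(2\eps))$ uniformly in $y\in\mathrm{supp}(\underline{u})$. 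Since $\underline{u}$ is nontrivial, $\|\underline{u}\|_{L^1}>0$; dividing through and setting $T=3R_0/(2\eps)$ then gives
\[ F(T)\ \leq\ C\,T^{-m}\qquad\text{for all }T\geq T_0:=\tfrac{3R_0}{2\eps_0}, \]
with $C=(3R_0/2)^m\|a^+\|_\infty$.

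Finally, the layer-cake representation $\int_{\R^N}J(x)|x|^\beta\,\mathrm{d}x=\beta\int_0^\infty T^{\beta-1}F(T)\,\mathrm{d}T$, combined with $F\leq1$ on $[0,T_0]$ and $F(T)\leq CT^{-m}$ on $[T_0,\infty)$, produces, for every $\beta\in(0,m)$,
\[ (m-\beta)\int_{\R^N}J(x)|x|^\beta\,\mathrm{d}x\ \leq\ (m-\beta)T_0^\beta+\beta\,C\,T_0^{\beta-m}, \]
whose $\limsup$ as $\beta\to m^-$ is at most $mC<\infty$; this also shows that each $\beta$-th moment with $\beta<m$ is finite.

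The only genuinely delicate step is the geometric lower bound in the second paragraph: one has to translate ``the convolution defect of $\mathds{1}_{B_{R_0}}$ against $J_\eps$ on $\mathrm{supp}(\underline{u})$'' into ``a genuine far-field tail of the unscaled kernel $J$'', which is what produces the polynomial factor $\eps^m$ needed to cancel the $\eps^{-m}$ in front of the nonlocal operator. The remaining steps (Fubini duality, bounding the reaction term, layer-cake) are routine.
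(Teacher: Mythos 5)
Your proof is correct, and it takes a genuinely different route from the paper's. The paper multiplies the sub-solution inequality by $\underline{u}$ itself, uses the symmetric quadratic-form identity $\int \underline{u}\,(J_\eps\ast\underline{u}-\underline{u}) = -\tfrac12\iint J_\eps(y)\,(\underline{u}(x-y)-\underline{u}(x))^2\,\mathrm{d}x\,\mathrm{d}y$, restricts to the region $|x|\ge 1$ where $\underline{u}$ vanishes, and then converts the resulting $\eps$-uniform bound into a moment bound by dividing by $\eps^{\ell}$ and integrating over $\eps\in(0,1)$, together with the change of variable $\delta=\eps|y|$. You instead test against a fixed indicator $\mathds{1}_{B_{R_0}}$, invoke the symmetry of $J_\eps$ to transfer the convolution onto the indicator, and read off a genuine pointwise tail decay $F(T)\le C\,T^{-m}$ for $T\ge T_0$; the moment statement then follows from a routine layer-cake computation. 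Both arguments use essentially the same geometric input (the far-field of the scaled kernel must ``see'' the complement of the support of $\underline{u}$), but your route avoids the $\eps$-integration trick entirely and is arguably more elementary and more transparent, and it delivers a slightly stronger intermediate conclusion: a quantitative decay estimate on $F$, not just the near-critical moment bound. The paper's approach has the merit of sitting naturally alongside the Bourgain--Brezis--Mironescu / Lamy--Mironescu machinery that the author invokes elsewhere (indeed the subsequent remark compares the argument to \cite[Proposition 3]{LM}), but there is nothing lost in your version.
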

\begin{proof}
Let $\eps_0>0$ and $\underline{u}\in C_c(\R^N)$ be such that $\underline{u}$ is a nonnegative sub-solution to \eqref{KPP} for all $0<\eps<\eps_0$. Assume, without loss of generality, that $\eps_0=1$ and that $\mathrm{supp}(\underline{u})=B_1$. Then, by assumption, we have $\eps^{-m}(J_\eps\ast\underline{u}-\underline{u})+\underline{u}(a-\underline{u})\geq0\text{ in }\R^N$.
Multiplying this by $\underline{u}$ and integrating over $\R^N$, we obtain
$$ \frac{1}{2\hspace{0.05em}\eps^m}\int_{\R^N}\int_{\R^N}J_\eps(y)(\underline{u}(x-y)-\underline{u}(x))^2\mathrm{d}x\hspace{0.05em}\mathrm{d}y\leq \int_{\R^N}\underline{u}^2a^+=:I. $$
But since $\underline{u}$ is supported in $B_1$, we have
\begin{align*}
I&\geq\frac{1}{2\hspace{0.05em}\eps^m}\int_{\R^N}\int_{|x|\geq1}J(y)\hspace{0.1em}\underline{u}(x-\eps\hspace{0.05em}y)^2\mathrm{d}x\hspace{0.05em}\mathrm{d}y=\frac{1}{2\hspace{0.05em}\eps^m}\int_{\R^N}\int_{|z+\eps y|\geq1}J(y)\hspace{0.1em}\underline{u}(z)^2\mathrm{d}z\hspace{0.05em}\mathrm{d}y.
\end{align*}
Dividing both sides of this inequality by $\eps^{\ell}$ for some $\ell\in(0,1)$ and integrating with respect to $\eps\in(0,1)$, we obtain
\begin{align*}
(1-\ell)^{-1}I&\geq \frac{1}{2}\int_0^1\left(\int_{\R^N}\int_{|z+\eps y|\geq 1}J(y)\hspace{0.1em}\underline{u}(z)^2\hspace{0.1em}\mathrm{d}y\hspace{0.05em}\mathrm{d}z\right)\frac{\mathrm{d}\eps}{\eps^{m+\ell}} \\
&=\frac{1}{2}\int_{\R^N}\int_0^{|y|}\int_{|z+\delta y/|y||\geq1}J(y)\hspace{0.1em}\underline{u}(z)^2\hspace{0.1em}|y|^{m+\ell-1}\hspace{0.1em}\frac{\mathrm{d}\delta}{\delta^{m+\ell}}\mathrm{d}y\hspace{0.05em}\mathrm{d}z
\end{align*}
Observe that if $y\in\R^N\setminus B_3$, $\delta\in[2,3]$ and $z\in B_1$, then we have $0<\delta\leq|y|$ and $|z+\delta\hspace{0.1em}y/|y||\geq1$. We thus obtain the lower bound
\begin{align*}
(1-\ell)^{-1}I&\geq \frac{1}{2}\int_2^3\frac{\mathrm{d}\delta}{\delta^{m+\ell}}\times\int_{\R^N\setminus B_3}J(y)|y|^{m+\ell-1}\mathrm{d}y\times\int_{B_1}\underline{u}(z)^2\mathrm{d}z,
\end{align*}
which thereby completes the proof.
\end{proof}
\begin{remark}
Interestingly, the proof of Proposition~\ref{MOMENT} is similar to that of \cite[Proposition 3]{LM}. In \cite{LM}, Lamy and Mironescu show that some Besov spaces can be characterized by quantities which strikingly resemble the nonlocal diffusion part of \eqref{KPP}, namely
\begin{align}
\sup_{0<\eps<1}\hspace{0.1em}\frac{\|J_\eps\ast f-f\|_{L^p(\R^N)}}{\eps^s}, \label{LaMi}
\end{align}
provided $J$ has a finite $s$-th order moment.
We believe that a fine understanding of \eqref{LaMi} in the case where $f$ also depends on $\eps$ could be helpful to study the influence of tail of the kernel on the asymptotic properties of solutions to \eqref{KPP}.
\end{remark}

\section*{Acknowledgments} The author warmly thanks \emph{J\'er\^ome Coville} who looked through a preliminary version of the present manuscript and offered many comments. The author also thanks the anonymous referee whose insightful comments helped improve and clarify the manuscript. This project has been supported by the French National Research Agency (ANR) in the framework of the ANR NONLOCAL project (ANR-14-CE25-0013).

\vspace{2mm}


\begin{thebibliography}{12}
\setlinespacing{0.95}
\frenchspacing

\bibitem{Allen} {\sc A. P. Allen, B.-L. Li, E. L. Charnov}:
Population fluctuations, power laws and mixtures of lognormal distributions,
{\em  Ecol. Lett.}, 4, (2001), pp. 1–3.


\bibitem{Bartumeus} {\sc F. Bartumeus}:
L\'evy processes in animal movement: an evolutionary hypothesis,
{\em Fractals}, 15(2), (2007), pp. 151-162.

\bibitem{Bartumeus2} {\sc F. Bartumeus}:
Behavioral intermittence, L\'evy patterns, and randomness in animal movement,
{\em Oikos}, 118(4), (2009), pp. 488-494.

\bibitem{Bates} {\sc P. W. Bates, P. C. Fife, X. Ren, X. Wang}:
Travelling  waves in a convolution model for phase transitions,
{\em Arch. Ration. Mech. Anal.}, 138, (1997), pp. 105-136.

\bibitem{BaZh} {\sc P. W. Bates, G. Zhao}:
Existence, uniqueness and stability of the stationary solution to a nonlocal evolution equation arising in population dispersal,
{\em J. Math. Anal. Appl.}, 332(1), (2007), pp. 428-440.


\bibitem{BCV} {\sc H. Berestycki, J. Coville, H.-H. Vo}:
Persistence criteria for populations with non-local dispersion,
{\em J. Math. Biol.}, 72, (2016), pp. 1693-1745.

\bibitem{Bouin} {\sc E. Bouin, J. Garnier, C. Henderson, F. Patout}:
Thin front limit of an integro-differential Fisher-KPP equation with fat-tailed kernels,
{\em SIAM J. Math. Anal.}, 50(3), (2018), pp. 3365–3394.

\bibitem{BBM} {\sc J. Bourgain, H. Brezis, P. Mironescu}:
Another look at Sobolev spaces,
{\em Optimal Control and Partial Differential Equations} (J.L. Menaldi, E. Rofman and A. Sulem, eds.) A volume in honour of A. Bensoussan's 60th birthday, IOS Press., (2001), pp. 439-455.

\bibitem{JB} {\sc J. Brasseur}:
A Bourgain-Brezis-Mironescu characterization of higher order Besov-Nikol'skii spaces,
{\em Ann. Inst. Fourier}, 68(4), (2018), pp. 1671-1714.

\bibitem{BCHV} {\sc J. Brasseur, J. Coville, F. Hamel, E. Valdinoci}, Liouville type results for a nonlocal obstacle problem, {\em PLMS}, 119(2), (2019), pp. 291-328.

\bibitem{BC} {\sc J. Brasseur, J. Coville}:
A counterexample to the Liouville property of some nonlocal problems,
{\em Ann. IHP Anal. Non Linéaire (to appear)}, (2018)

\bibitem{Cain} {\sc M. L. Cain, B. G. Milligan, A. E. Strand}:
Long-distance seed dispersal in plant populations,
{\em Am. J. Bot.}, 87(9), (2000), pp. 1217–1227.

\bibitem{Cantrell} {\sc R. S. Cantrell, C. Cosner, K.-Y. Lam}:
Resident-invader dynamics in infinite dimensional systems,
{\em J. Diff. Eq.}, 263(8), (2017), pp. 4565–4616.

\bibitem{Chapman} {\sc D. S. Chapman, C. Dytham, G. S. Oxford}:
Modelling population redistribution in a leaf beetle: an evaluation of alternative dispersal functions,
{\em J. Anim. Ecol.}, 76(1), (2007), pp. 36-44.

\bibitem{Clark} {\sc J. S. Clark}:
Why trees migrate so fast: confronting theory with dispersal biology and the paleorecord,
{\em The American Naturalist}, 152(2), (1998), pp. 204-224.

\bibitem{Clark2} {\sc J. S. Clark, C. Fastie, G. Hurtt, S. T. Jackson, C. Johnson, G. A. King, M. Lewis, J. Lynch, S. Pacala, C. Prentice, E. Schupp, T. Webb III, P. Wyckoff}:
Reid's paradox of rapid plant migration,
{\em Bio-Science}, 48, (1998), pp. 13–24.

\bibitem{CovDup07} {\sc J. Coville, L. Dupaigne}:
On a non-local reaction diffusion equation arising in population dynamics,
{\em Proc. Math. Roy. Soc. of Edinburgh Sect. A}, 137(4), (2007), pp. 1-29.

\bibitem{Cov07} {\sc J. Coville}:
Travelling fronts in asymmetric nonlocal reaction diffusion equations: the bistable and ignition cases,
{\em Preprint: hal-00696208}, (2007).

\bibitem{Cov08} {\sc J. Coville}:
Remarks on the strong maximum principle for nonlocal operators,
{\em Electron. J. Diff. Eqns.}, 66, (2008), pp. 1-10.

\bibitem{Cov10} {\sc J. Coville}:
On a simple criterion for the existence of a principal eigenfunction of some nonlocal oper-ators,
{\em J. Diff. Eq.}, 249(11), (2010), pp. 2921-2953.



\bibitem{Garnier} {\sc J. Garnier}:
Accelerating solutions in integro-differential equations,
{\em  SIAM J. Math. Anal.}, 43(4), (2011), pp. 1955-1974.

\bibitem{Gilbarg} {\sc D. Gilbarg, N. S. Trudinger}:
Elliptic partial differential equations of second order,
{\em Springer-Verlag Berlin Heidelberg}, (1977).

\bibitem{Hallat} {\sc O. Hallatschek, D. S. Fisher}:
Acceleration of evolutionary spread by long-range dispersal,
{\em PNAS}, 111(46), (2014), pp. E4911-E4919.

\bibitem{Humphries} {\sc N. E. Humphries, N. Queiroz, J. R. M. Dyer, N. G. Pade, M. K. Mu-syl, K. M. Schaefer, D. W. Fuller, J. M. Brunnschweiler, T. K.Doyle, J. D. R. Houghton, G. C. Hays, C. S. Jones, L. R. Noble, V. J. Wearmouth, E. J. Southall, D. W. Sims}:
Environmental context  explains  L\'evy  and  Brownian  movement  patterns  of  marine  predators,
{\em Nature}, 465, (2010), pp. 1066-1069.

\bibitem{Hutson} {\sc V. Hutson, S. Martinez, K. Mischaikow, G. T. Vickers}:
The evolution of dispersal,
{\em J. Math. Biol.}, 47(6), (2003), pp. 483–517.

\bibitem{Kao} {\sc C.-Y. Kao, Y. Lou, W. Shen}:
Random dispersal vs. non-local dispersal,
{\em  Discrete Contin. Dyn. Syst.}, 26(2), (2010), pp. 551–596.

\bibitem{LM} {\sc X. Lamy, P. Mironescu}:
Characterization of function spaces via low regularity mollifiers,
{\em Discrete Contin. Dyn. Syst.}, 35(12), (2015), pp. 6015-6030.

\bibitem{LI} {\sc W.-T. Li, Y.-H. Su, F.-Y. Yang}:
Asymptotic behaviors for nonlocal diffusion equations about the dispersal spread,
{\em Preprint: arXiv:1911.07665v1}, (2019).


\bibitem{Petrovskii} {\sc S. Petrovskii, A. Morozov, B.-L. Li}:
On a possible origin of the fat-tailed dispersal in population dynamics,
{\em Ecol. Complex.}, 5(2), (2008), pp. 146-150.

\bibitem{Ponce} {\sc A. Ponce}:
An estimate in the spirit of Poincar\'e's inequality,
{\em J. Eur. Math. Soc.}, 6, (2004), pp. 1-15.

\bibitem{Radinger} {\sc J. Radinger, C. Wolter}:
Patterns and predictors of fish dispersal in rivers,
{\em Fish and Fisheries}, 15(3), (2013), pp. 456-473.

\bibitem{Schurr} {\sc F. M. Schurr, O. Steinitz, R. Nathan}:
Plant fecundity and seed dispersal in spatially heterogeneous environments: models, mechanisms and estimation,
{\em J. Ecol.}, 96(4), (2008), pp. 628–641.

\bibitem{VoShen2} {\sc Z. Shen, H.-H. Vo}:
Nonlocal dispersal equations in time-periodic media: principal spectral theory, limiting properties and long-time dynamics,
{\em J. Diff. Eq.}, 267(2), (2019), pp. 1423-1466.

\bibitem{Vo} {\sc H.-H. Vo}:
Principal spectral theory of time-periodic nonlocal dispersal operators of Neumann type,
{\em Preprint: arXiv:1911.06119}, (2019).

\bibitem{Yagisita} {\sc H. Yagisita}:
Existence and nonexistence of traveling waves for a nonlocal monostable equation,
{\em Publ. RIMS, Kyoto Univ.}, 45, (2009), pp. 925–953.

\end{thebibliography}
\end{document}